\pgfplotsset{compat=1.15}
\newtheorem{theorem}{Theorem}[section]
\newtheorem{lemma}[theorem]{Lemma}
\theoremstyle{definition}
\newtheorem{definition}[theorem]{Definition}
\newtheorem{question}[theorem]{Question}
\newtheorem{conjecture}[theorem]{Conjecture}
\newtheorem{corollary}[theorem]{Corollary}
\newcommand{\ZZ}{\mathbb{Z}}
\newcommand{\PolyP}{\mathcal{P}}
\newcommand{\PolyPStar}{\mathcal{P}^*}
\newcommand{\PolySet}{\mathbb{P}}
\newcommand{\BoardB}{\mathbb{B}}
\newcommand{\FreePackingNumber}[1]{pn^{free}(#1)}
\newcommand{\FixedPackingNumber}[1]{pn^{fixed}(#1)}
\newcommand{\FixedEquiv}{\approx}
\newcommand{\FreeEquiv}{\sim}
\newcommand{\CPFI}[1]{cp^{fix}\left({#1}\right)} 
\newcommand{\CPFR}[1]{cp^{free}\left({#1}\right)} 
\newcommand{\CP}[1]{cp\left({#1}\right)} 
\begin{document}

\title{Clumsy Packing of Polyominoes in Finite Space}

\author{
Emma Miller, Moravian University \\
Mitchel O'Connor, Whitman College \\
Nathan Shank, Moravian University}

\date{\today}

\maketitle
\section{Introduction}


In 1971, Sands \cite{Sands71} proposed the following problem:  \textit{Given an $n \times n$ checkerboard, what is the minimum number of dominoes we can place on the board, assuming each domino covers exactly two adjacent squares, so that no additional domino can fit?} In other words, what is the maximum number of $1\times 1$ `holes' that can be created? In 1988, Gy\'{a}rf\'{a}s, Lehel, and Tuza \cite{Gyarfas88} called this type of packing a \textit{clumsy packing} which requires the density of packed polyominoes to be as small as possible relative to the size of the board. They were able to generate results for general board sizes with dominoes, which are $1\times 2$ rectangular polyominoes. Goddard \cite{Goddard95} considered clumsy packing density for \textit{hooks} (3 squares in the shape of an $L$), $m\times m$ \textit{squares}, and $1\times m$ \textit{longs} on infinite board. More recently, Walzer (\cite{Walzer14b}, \cite{WALZER14}) considered other clumsy packing of the infinite plane for more complicated polyominoes.  Tiling and packing of polyominoes has particular importance for topology and geometry with applications in physics and chemistry.  

In this paper, we consider a clumsy packing of polyominoes on a finite space. The polyominoes we consider are \textit{rectangular}, $L$, $T$, and $\textit{plus}$ polyominoes. If a polyomino is made of $n$ squares, we consider packing on a $n \times n$ board. This finite packing gives configurations and densities that are different than the previous results because of the edge effect that we have on a finite board.  Clumsy packing on finite boards has applications in product design related to filling a finite space using the least amount of material possible. 

\section{Background and Definitions}

        
    We will assume throughout that $a, b$, and $n$ are positive integers. We will be packing our polyominoes on a square grid of size $n \times n$ where $n$ is the size of the polyomino. We will adapt more of our notation and definitions from chapter 14 of \cite{book:Chap14}, however, to help with notation and terminology, we will make the following definitions:   
        
    \subsection{The Board}
    
    The board will be a square $n \times n$ grid where we place our polyominoes.  So, an $8 \times 8$ board can be visualized as a checkerboard.  A $1\times1$ closed square on our grid whose sides are parallel to the coordinate axes and corners are at integer coordinates will be called a \textit{cell} and we will denote the cell in column $i$ and row $j$ by $C_{i,j}$. For $1 \leq i, j \leq n$ define $X_i = \{C_{i, j}: 1 \leq j \leq n\}$ and $Y_j = \{C_{i, j}: 1 \leq i \leq n\}$.  We will refer to $X_i$ as column $i$ and $Y_j$ as row $j$. We will use the convention that $X_1$ is the leftmost column, $X_n$ is the rightmost column, $Y_1$ is the top row, and $Y_n$ is the bottom row. Thus, cell $C_{1,1}$ is the cell in the first column and first row which would be the upper left cell of the grid.  The square $n \times n$ grid will be called the board, $\BoardB$, so that $\BoardB = \{C_{i, j} : 1 \leq i \leq n \text{ and } 1 \leq j \leq n \text{ where } i,j \in \ZZ\}$.
        
    \subsection{General Polyominoes}
        A \textit{polyomino}, $\PolyP$, is a a finite set of cells. The number of cells in a polyomino will be the \textit{size} of the polyomino and will be denoted $|\PolyP|$. Note that two polyominoes are considered disjoint if they do not have any cells in common, but visually they may share edges or corners. Polyominoes will be located on board based on an \textit{anchor} which will be a specific cell that uniquely determines the location of the polyomino on the board. Informally, for example, for a $L$ polyomino (definition \ref{def:L polyomino}) we will define the anchor as the cell located on the intersection of the horizontal and vertical legs. 
        
        For integers $c$ and $d$ a \textit{shift} of a polyomino $\PolyP$ by $(c,d)$ is a polyomino $\PolyP_1$ so that $\PolyP_1 = \{C_{x+c, y+d}: C_{x,y} \in \PolyP\}$.  Thus $\PolyP_1$ is a shift of $\PolyP$ $c$ units horizontally and $d$ units vertically. We will use the notation $\PolyP_1 = \PolyP + (c,d)$ to represent a shift by $(c,d)$. 
        
        For certain polyominoes, we will consider a clockwise rotation of our polyomino by an integer multiple of $90^\circ$. For any integer $m$, if $\PolyP$ is a polyomino then $\PolyP R^m$ is a rotation of $\PolyP$ by $(m\cdot90)^\circ$ clockwise. This idea will be made more formal for each individual polyomino. Note that a rotation or shift may produce a polyomino which is no longer on the board. 
        
        We will not be considering reflections of polyominoes in this paper which could be of particular importance for $L$ polyominoes. We will be working with two different types of packings, free and fixed. For \textit{fixed packing}, we will not be allowed to rotate the polyomino, we will only be allowed to shift the polyomino.  In \textit{free packing}, we will be allowed to rotate and shift the polyomino.  
        
        \begin{definition}
            We will call two polyominoes $\PolyP_1$ and $\PolyP_2$ \textit{fixed equivalent} if there is an integer pair $(c,d)$ so that $\PolyP_2= \PolyP_1+(c,d)$. We will denote fixed equivalent polyominoes by $\PolyP_1 \FixedEquiv \PolyP_2$.
        \end{definition} 
        
        Thus, two polyominoes are fixed equivalent if we can shift one polyomino by $(c,d)$ to equal the other polyomino.
        
        For free equivalent polyominoes, we are allowed to rotate and shift the polyominoes. 
        
        \begin{definition}
            We will call two polyominoes $\PolyP_1$ and $\PolyP_2$ \textit{free equivalent} if there is a non-negative integer $m$ and and integers pair $(c,d)$ so that $\PolyP_2 = \PolyP_1 R^m + (c,d)$. We will denote free equivalent polyominoes by $\PolyP_1 \FreeEquiv \PolyP_2$.
        \end{definition}
        
        Our objective is to pack as few copies of our polyomino on the board so that we can not include another polyomino.  To do so, we need to define how polyominoes fit onto a board and what makes a valid arrangement.  
        
    \begin{definition}
        Given a board $\BoardB$ and a set of polyominoes $\PolySet = \{\PolyP_1, \ldots \PolyP_k\}$, we say $\PolySet$ is a \textit{valid arrangement} if $\bigcup_{i=1}^k \PolyP_i \subseteq \BoardB$ and if $\PolyP_i \cap \PolyP_j = \emptyset$ for all $1\leq i < j \leq k$. Otherwise, we call $\PolySet$ an \textit{invalid arrangement.}  
    \end{definition}
    
    Therefore a valid arrangment simply means the the set of polyominoes is pairwise disjoint and fits onto the board.  

    The following definitions are for free polyominoes and can easily be extended to fixed polyominoes.   
    
        \begin{definition}
            A set of polyominoes composed of polyominoes which are free equivalent polyominoes to $\PolyP$ will be called a \textit{free polyomino set of $\PolyP$.}
        \end{definition}

        \begin{definition}
            A free polyomino set of $\PolyP$, denoted $\PolySet$, is a \textit{free packing of $\PolyP$} on a board $\BoardB$ if $\PolySet$ is a valid arrangement such that for any free equivalent polyomino $\PolyPStar \FreeEquiv \PolyP$ we have that $\PolyPStar \cup \PolySet$ is an invalid arrangement. Thus, we can think of a free packing as a maximal set of $\PolyP$. 
        \end{definition}
        
    So for a set of polyominoes to be a free packing, they must be a valid packing and they must be maximal in the sense that no additional polyomino can be placed onto the board without overlapping an already placed polyomino.  The number of polyominoes in the set is the free packing number of the set.  
        
        \begin{definition}
            Given a free packing of $\PolyP$, denoted $\PolySet$, the size of $\PolySet$ is the \textit{free packing number} of $\PolySet$. We will denote this by $\FreePackingNumber{\PolySet}$.
        \end{definition}
    
        \begin{definition}
            The \textit{clumsy free packing number of $\PolyP$}, denoted $\CPFR{\PolyP}$ is the minimum free packing number over all free polyomino sets of $\PolyP$.  
        \end{definition}
        
    The clumsy free packing number is the minimum of all of the possible free packing numbers for a particular polyomino.  In essence, the clumsy free packing number is the minimum of the maximals.  Any set of polyominoes which attains this minimum will be called a clumsy free packing.
        
        \begin{definition}
            A \textit{clumsy free packing of $\PolyP$} is a free polyomino set of $\PolyP$, denoted $\PolySet$, whose free packing number is the clumsy free packing number of $\PolyP$.  Thus $\FreePackingNumber{\PolySet} = \CPFR{\PolyP}$. 
        \end{definition}  
 
    Unless otherwise specified, we will always consider packing (free or fixed) of a polyomino $\PolyP$ on a $n \times n$ board $\BoardB$ where $n=|\PolyP|$.         

\section{Results}
    This section will consist of subsection for \textit{rectangular} polyominoes, $L$ polyominoes, $T$ polyominoes, and \textit{plus} polyominoes. In each section, we will define the polyomino and consider fixed and free packing.  
    
    \subsection{Rectangular Polyominoes}
         \textit{Rectangular polyominoes} are a popular polyomino to study.  The classic domino is a rectangular polyomino.  Some packing problems have been studied for dominoes and straights, which are $1\times m$ \textit{rectangular} polyominoes.  
         
            \begin{definition}\label{def:rectangular polyomino}
                The \textit{rectangular polyomino}, $R_{a,b}$, is the polyomino consisting of all cells $C_{i,j}$ where $1 \leq i \leq a$ and $1 \leq j \leq  b$.  Thus $R_{a,b}$, is a rectangular polyomino of size $ab$.  
            \end{definition}
        
        To know the location of a rectangular polyomino, we need to define the anchor. 
        
            \begin{definition}\label{def:anchor}
                Given a rectangular polyomino, $R_{a,b}$, with $a, b >1$, we will define the \textit{anchor} of the polyomino to be the cell $C_{\lfloor \frac{a}{2}\rfloor, \lfloor \frac{b}{2}\rfloor}$. 
            \end{definition}
            
            Notice that if $a=b=1$ then $R_{1,1}$ is just an individual cell which is a trivial case since $\CPFI{R_{1,1}} = \CPFR{R_{1,1}} = 1.$
        
        \textit{Straight polyominoes} are a special case of rectangular polyominoes. We will use straight vertical and straight horizontal polyominoes for our clumsy free packing.  
        
            \begin{definition}\label{def:straight vertical polyomino}
                For $n>1$, the \textit{straight vertical polyomino}, $SV_{n}$, is the rectangular polyomino $R_{1,n}$. We will define the anchor as $C_{1, \lfloor \frac{n}{2} \rfloor}$. 
            \end{definition}
        
            \begin{definition}\label{def:straight horizontal polyomino}
                For $n>1$, the \textit{straight horizontal polyomino}, $SH_{n}$, is the rectangular polyomino $R_{n,1}$. We will define the anchor as $C_{\lfloor \frac{n}{2}\rfloor, 1}$
            \end{definition}
        
        The following theorem finds the clumsy packing number for fixed and free straight polyominoes.  
        
            \begin{theorem}\label{thm:straight polyominoes} 
                For any $n\geq 1$,  $$\CPFR{SV_n}=\CPFR{SH_n} = \CPFI{SV_{n}}=\CPFI{SH_{n}} = n.$$ 
            \end{theorem}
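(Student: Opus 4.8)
The plan is to exploit the extreme rigidity of straight polyominoes on a square board of exactly matching size: I expect to show that \emph{every} packing (free or fixed) already has size $n$, so the minimum over all packings is $n$ with essentially no optimization to carry out.

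First I would reduce the four quantities to one. Since rotating $SV_n$ by $90^\circ$ gives $SH_n$, the two polyominoes are free equivalent, so they have the same free polyomino sets and hence $\CPFR{SV_n} = \CPFR{SH_n}$; and transposing the board, i.e. the map $C_{i,j} \mapsto C_{j,i}$, is a bijection carrying fixed packings of $SV_n$ to fixed packings of $SH_n$, so $\CPFI{SV_n} = \CPFI{SH_n}$. Thus it suffices to evaluate $\CPFR{SV_n}$ and $\CPFI{SV_n}$. The case $n = 1$ is the trivial single-cell polyomino $R_{1,1}$ already noted in the text, so I may assume $n \ge 2$.

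Next I would identify all admissible placements. A polyomino free equivalent to $SV_n$ is, after a shift, either $R_{1,n}$ (a vertical $1\times n$ strip) or $R_{n,1}$ (a horizontal strip). A vertical strip of length $n$ fits inside the $n\times n$ board only by occupying all $n$ rows, so it must equal some column $X_i$; likewise an admissible horizontal strip must equal some row $Y_j$. For the fixed packing of $SV_n$, where rotation is forbidden, only the columns $X_1, \dots, X_n$ are available. Now comes the one structural point: for any $i, j$ the column $X_i$ and the row $Y_j$ intersect in the single cell $C_{i,j}$, so they are never disjoint; hence a valid arrangement of free copies of $SV_n$ cannot contain both a column and a row, and must consist either of distinct columns only or of distinct rows only — in particular it has at most $n$ members.

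Finally I would invoke maximality. Suppose $\PolySet = \{X_{i_1}, \dots, X_{i_k}\}$ is a valid all-columns arrangement with $k < n$, and pick an index $i$ not among the $i_\ell$. Every occupied cell lies in one of the chosen columns, so column $X_i$ is entirely empty, and adjoining the copy $X_i$ of $SV_n$ keeps the arrangement valid, contradicting the definition of a packing. The same argument handles the all-rows case, so every free packing of $SV_n$ has exactly $n$ pieces; as $\{X_1,\dots,X_n\}$ is evidently a valid maximal arrangement, such packings exist, and $\CPFR{SV_n} = n$. Restricting this argument to columns only yields $\CPFI{SV_n} = n$, and combined with the reductions of the second paragraph this proves the theorem. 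The only delicate bookkeeping is the ``a length-$n$ strip fits only as a full row or column'' step together with the maximality count; I do not anticipate a genuine obstacle, since the family of valid arrangements is so constrained that no density estimate is ever needed.
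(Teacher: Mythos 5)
Your proposal is correct and follows essentially the same route as the paper: copies of $SV_n$ on an $n\times n$ board must occupy full columns (rows, after rotation), a row and a column always intersect so the two orientations cannot mix, and any arrangement missing a column is not maximal, forcing every packing to have exactly $n$ pieces. Your treatment is slightly more explicit about the reduction $SV_n \leftrightarrow SH_n$ and the $n=1$ case, but no new idea is involved.
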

        
            \begin{proof} 
                \textbf{Fixed Case:}  We will only consider straight vertical polyominoes, $SV_{n}$.  Notice that any polyomino $\PolyP_1 \approx SV_n$ (fixed equivalent) will consist of cells from only one column $X_i$.  So a valid arrangement of $1<m<n$ polyominoes fixed equivalent to $SV_n$ will have $n-m$ empty columns, which is not a fixed packing. Therefore the clumsy packing number must be at least $n$, but it clearly can not be greater than $n$ since there are only $n^2$ cells in the board. Thus $\CPFI{SV_n} = n$.  
            
            
                \noindent\textbf{Free Case:} Notice that any rotation of a straight polyomino will result in a straight polyomino.  Also, any valid arrangement on a $n \times n$ board cannot contain a straight vertical polyomino and a straight horizontal polyomino as they cannot be disjoint. Therefore $\CPFR{SV_n} = \CPFI{SV_n}$. 
                An identical argument can be made for $SH_{n}$ using rows.

            \end{proof}
        
        The following theorem finds the clumsy fixed packing number for \textit{rectangular} polyominoes of size at least $2 \times 2$.  
        
            \begin{theorem}\label{thm:fixed rectangular where a,b geq 2} 
                For any rectangular polyomino, $R_{a,b}$ with $a, b \geq 2$, the clumsy fixed packing is $\CPFI{R_{a,b}} = \left\lceil \frac{ab-a+1}{2a-1} \right\rceil \left\lceil \frac{ab-b+1}{2b-1} \right\rceil$.
            \end{theorem}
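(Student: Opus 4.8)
The plan is to establish matching bounds $\CPFI{R_{a,b}} \le pq$ and $\CPFI{R_{a,b}} \ge pq$, where I abbreviate $p = \left\lceil \frac{ab-a+1}{2a-1}\right\rceil$ and $q = \left\lceil \frac{ab-b+1}{2b-1}\right\rceil$. The basic fact I use repeatedly is that, in the fixed setting, every placed polyomino is a translate of $R_{a,b}$, i.e.\ an axis-aligned $a\times b$ block; since the board has side $n = ab$, a block with corner cell $C_{i,j}$ (occupying columns $i,\dots,i+a-1$ and rows $j,\dots,j+b-1$) is legal exactly when $1 \le i \le ab-a+1$ and $1 \le j \le ab-b+1$, and two blocks with corners $C_{i,j}$ and $C_{c,d}$ intersect precisely when $|i-c| \le a-1$ and $|j-d| \le b-1$.

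\textbf{Upper bound.} First solve the one-dimensional version: on a $1\times ab$ strip one can place $p$ pairwise disjoint $1\times a$ segments leaving no run of $a$ consecutive empty cells. This is a length-budget check --- the $p$ segments and the $p+1$ gaps around them total $ab$, every gap must have size $\le a-1$, and the conditions $0 \le ab - pa$ and $ab - pa \le (p+1)(a-1)$ hold, the latter being equivalent to $ab-a+1 \le p(2a-1)$ and the former following from $(a-1)(b+1)\ge 0$. Let $H \subseteq [1,ab-a+1]$ be the set of $p$ starting columns produced, and build $V \subseteq [1,ab-b+1]$ of size $q$ the same way with $b$ in the role of $a$. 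Place a copy of $R_{a,b}$ with corner $C_{h,v}$ for each $(h,v) \in H\times V$. Any two such copies lie in different column bands or different row bands, so the arrangement is valid; and it is maximal, since a would-be new block with corner $C_{i,j}$ has $i$ within $a-1$ of some $h \in H$ and $j$ within $b-1$ of some $v \in V$, hence meets the copy at $C_{h,v}$. This gives $\CPFI{R_{a,b}} \le pq$.

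\textbf{Lower bound.} Let $\PolySet = \{R_1,\dots,R_k\}$ be any maximal valid arrangement, with $R_m$ having corner $C_{c_m,d_m}$. For each legal row index $j$ put $S_j = \{R_m : |j-d_m| \le b-1\}$. Maximality says every legal block with corner $C_{i,j}$ meets some $R_m$, which must lie in $S_j$; equivalently, the union over $R_m \in S_j$ of the length-$(2a-1)$ windows $[c_m-a+1,\,c_m+a-1]$ contains all $ab-a+1$ legal values of $i$. Hence $(2a-1)|S_j| \ge ab-a+1$, so $|S_j| \ge p$ for every $j \in [1,ab-b+1]$. Now choose $q$ row indices $j_1 < \dots < j_q$ in $[1,ab-b+1]$ with consecutive gaps at least $2b-1$; this is possible because $(q-1)(2b-1) \le ab-b$, which is exactly the inequality delivered by $q = \lceil\frac{ab-b+1}{2b-1}\rceil$. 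For rows this far apart the sets $S_{j_1},\dots,S_{j_q}$ are pairwise disjoint (no single $R_m$ can have $d_m$ within $b-1$ of two of these rows), so $k \ge \sum_{s=1}^q |S_{j_s}| \ge pq$. Combined with the upper bound, $\CPFI{R_{a,b}} = pq$.

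\textbf{Main obstacle.} The delicate point is the lower bound: slicing a maximal arrangement into the horizontal slabs $S_j$ and then passing to $q$ mutually non-interacting slabs. Simply averaging $|S_j| \ge p$ over all rows only yields $k \ge p\cdot\frac{ab-b+1}{2b-1}$, whose ceiling can fall short of $pq$, so the separation step is what makes the bound tight. The rest --- verifying that the relevant ceilings force $p \le b$, force $(q-1)(2b-1) \le ab-b$, and make the one-dimensional gap distribution feasible --- is routine arithmetic valid for all $a,b \ge 2$.
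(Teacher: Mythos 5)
Your proof is correct, and while your upper bound is essentially the paper's construction (a product of two one-dimensional spaced packings, with the gap-budget check $ab-pa\le(p+1)(a-1)$ playing the role of the paper's ``add an extra row/column if needed'' step), your lower bound takes a genuinely different route. The paper anchors each rectangle at a near-central cell $C_{\lfloor a/2\rfloor,\lfloor b/2\rfloor}$, splits into even/odd parity cases for $a$ and $b$, partitions the board into $\left(2+\left\lfloor \frac{ab-(3a-1)}{2a-1}\right\rfloor\right)\left(2+\left\lfloor \frac{ab-(3b-1)}{2b-1}\right\rfloor\right)$ disjoint rectangular regions each of which must contain an anchor, and then needs the identity $\left\lceil \frac{T}{B}\right\rceil=\left\lfloor\frac{T-1-B}{B}\right\rfloor+2$ to reconcile that floor-based count with the ceiling-based upper bound. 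You instead reference each block by its corner, show that every ``row slab'' $S_j$ (blocks whose rows meet row window $j$) must contain at least $p=\left\lceil\frac{ab-a+1}{2a-1}\right\rceil$ blocks because their length-$(2a-1)$ column windows must cover all $ab-a+1$ legal corner columns, and then select $q$ rows pairwise at distance at least $2b-1$ so the slabs are disjoint, giving $k\ge pq$ directly. This buys you a uniform argument with no parity case analysis and no floor/ceiling conversion, and it isolates correctly why naive averaging over all rows is insufficient; it also transfers verbatim to boards of other dimensions. What the paper's approach buys in exchange is a concrete geometric picture (disjoint regions that visibly admit an extra polyomino when anchor-free, as in its figures), at the cost of the three-case bookkeeping. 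Both arguments are sound; yours is the tighter-written one for this statement.
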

        
            \begin{proof}
                Notice that $n = ab$. In order to find the clumsy fixed packing number for rectangular polyominoes, we will first show that there exists a fixed packing of a set of $R_{a,b}$, $\PolySet$, so that $\FixedPackingNumber{\PolySet} =  \left\lceil \frac{ab-a+1}{2a-1} \right\rceil \left\lceil \frac{ab-b+1}{2b-1} \right\rceil.$  
                
                Consider tiling the board by having $b-1$ empty rows, $Y_1$ through $Y_{b-1}$, and $a-1$ empty columns, $X_1$ through $X_{a-1}$. Then have a horizontal tiling of polyominoes with $a-1$ empty column between each polyomino and $b-1$ empty rows between each horizontal tiling. We may need to add an additional column of polyominoes so that the number of empty columns to the right of the board is less than or equal to $a-1$.  Similarly, we may need to add an additional row of polyominoes so that the number of empty rows at the bottom of the board is less than or equal to $b-1$. See Figure \ref{fig:RectPolyUpperBound}.
                
                \begin{figure}[h!]
                    \centering
                    \includegraphics[width = .9\linewidth]{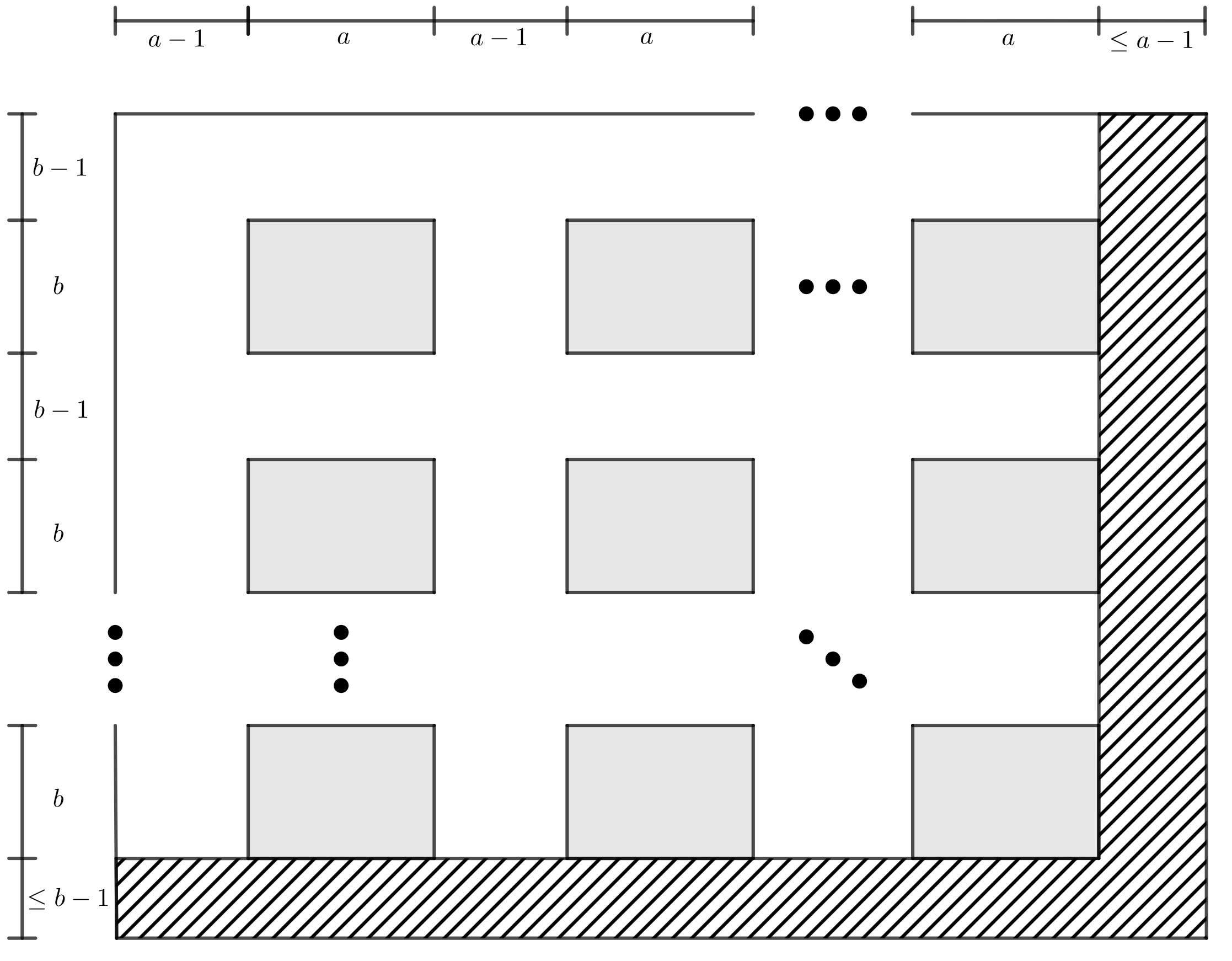}
                    \caption{This figure depicts a general example of a clumsy fixed packing of some $R_{a,b}$. The shaded areas are the polyominoes. The hatched region represents the area that may or may not include an additional row or column of polyominoes.}
                    \label{fig:RectPolyUpperBound}
                \end{figure}
            
                The number of polyominoes placed along a non-empty row will be $\left\lceil\frac{ab-(a-1)}{2a-1}\right\rceil$.  The number of polyominoes placed along a non-empty column will be $\left\lceil\frac{ab - (b-1)}{2b-1}\right\rceil$. So the total number of polyominoes in this particular tiling will be $\left\lceil \frac{ab-a+1}{2a-1} \right\rceil \left\lceil \frac{ab-b+1}{2b-1} \right\rceil$. Therefore $$\CPFI{R_{a,b}} \leq \FixedPackingNumber{\PolySet} = \left\lceil \frac{ab-a+1}{2a-1} \right\rceil \left\lceil \frac{ab-b+1}{2b-1} \right\rceil.$$
            
                Now we will show that this is also a lower bound for $\CPFI{R_{a,b}}$ and we will do this based on the parity of $a$ and $b$. 

                \textbf{Even/Odd Case}:  Assume we have a clumsy fixed packing of $R_{a,b},$ where $a,b \geq 2$ and $a=2k$ and $b=2l+1$ for non-negative integers $k$ and $l$. 

                \begin{figure}[h!]
                    \centering
                    \includegraphics[width=.7\linewidth]{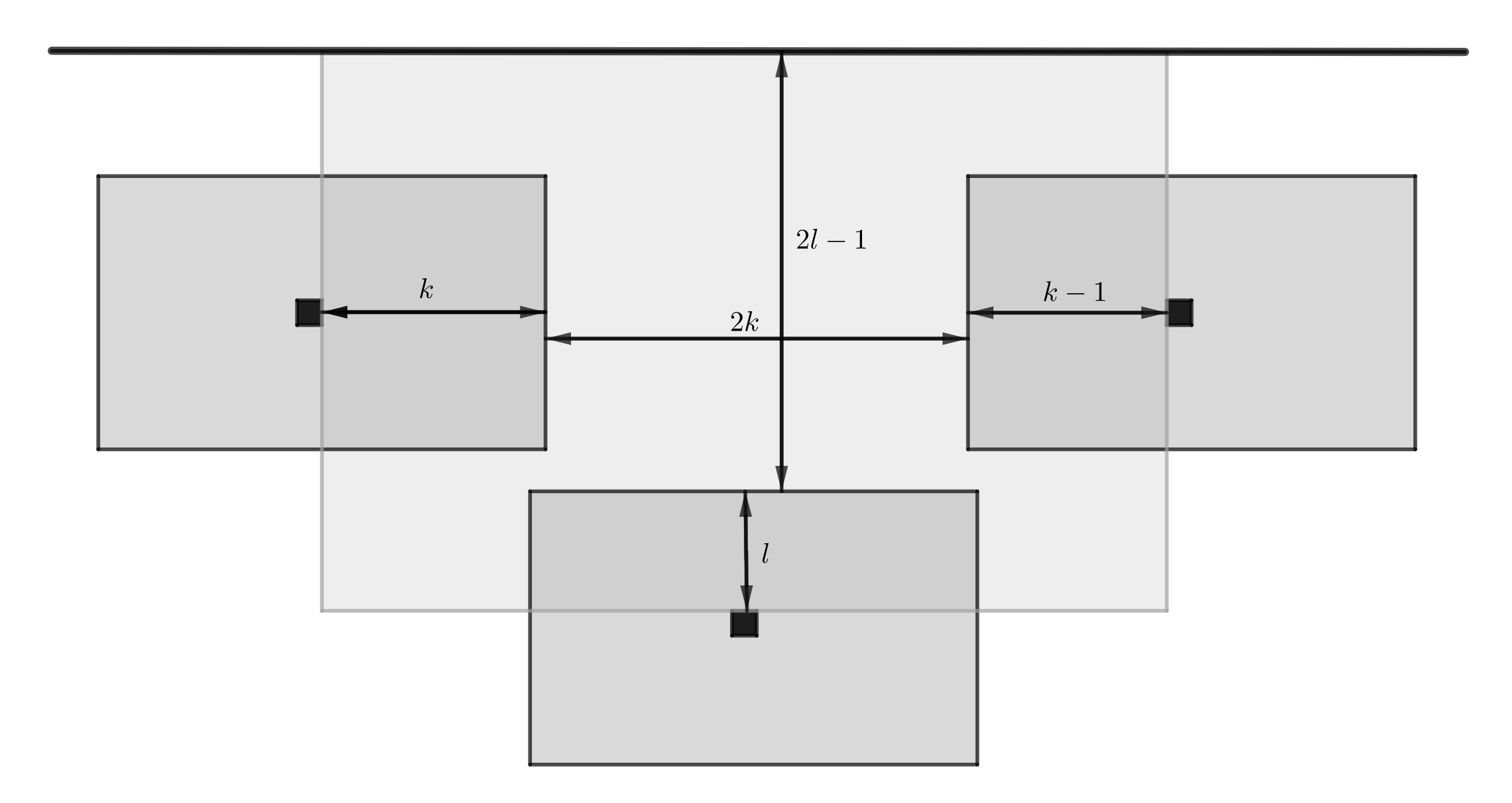}
                    \caption{Even by Odd Case: This shows that a $(4k-1) \times (3l+1)$ rectangular region along the top of the board must contain an anchor which are represented by the black cells. If not, there is a $(2k) \times (2l+1)$ region which can contain another polyomino.  
                    }
                    \label{fig:Even_Odd_Boundry}
                \end{figure}
            
                It is easy to see that there must be an anchor in the top $3l+1$ rows and the bottom most $3l+1$ rows and every $4l+1$ rows in between. Similarly, there also must be an anchor in the leftmost $3k-1$ column the rightmost $3k$ columns and every $4k-1$ columns in between. Note that every region of $4l+1$ rows and $4k-1$ columns must contain an anchor. Otherwise, in any region of such size without an anchor, we can fit an additional polyomino fixed equivalent to $R_{a,b}$. This is demonstrated in Figure \ref{fig:Even_Odd_Boundry} across the top edge of the board. 
                Therefore, there are $2 + \left\lfloor \frac{ab - 2(3l+1)}{4l+1} \right\rfloor=2 + \left\lfloor \frac{ab - (3b-1)}{2b-1} \right\rfloor$ pairwise disjoint sets of rows, each of which must contain an anchor in each of the $2+\left\lfloor \frac{ab - (3k-1+3k)}{4k-1} \right\rfloor = 2+\left\lfloor \frac{ab - (3a-1)}{2a-1} \right\rfloor $ pairwise disjoint sets of columns. This leaves us with a total of $ \left\lfloor 2+ \frac{ab - (3b-1)}{2b-1} \right\rfloor \left\lfloor 2+ \frac{ab - (3a-1)}{2a-1} \right\rfloor$ disjoint regions on the board each of which must contain an anchor. 
                
                (Note that if $\frac{ab-2(3l+1)}{4l+1}<0$, then the top $3l+1$ rows and bottom  $3l+1$ rows are not disjoint and therefore we can place one anchor which will be in the top most $3l+1$ rows and the bottom $3l+1$ rows.  Also, notice that since $a\geq 2$ we know $ab\geq 2l+1$ which implies $\frac{ab-2(3l+1)}{4l+1} \geq -1$. Similarly for columns.)
            
                This shows that for the even/odd case we have 
                \begin{equation*}\label{equation:CPFIXRECT}
                    \CPFI{R_{a,b}} \geq \left\lfloor 2+ \frac{ab - (3b-1)}{2b-1} \right\rfloor \left\lfloor 2+ \frac{ab - (3a-1)}{2a-1} \right\rfloor.
                \end{equation*}

                \textbf{Even/Even Case}: Assume $a,b \geq 2$ and $a=2k$ and $b=2l$. Following similar logic as above, we can see there must be an anchor in the top $3l-1$ rows, bottom $3l$ rows, and every $4l-1$ rows in between.  There also must be an anchor in the leftmost $3k-1$ columns, the rightmost $3k$ columns, and every $4k-1$ columns in between.  Note that every region of $4k-1$ columns and $4l-1$ rows must contain an anchor.  
                
                Therefore, there are $2 + \left\lfloor \frac{ab - (3l-1)-3l}{4l-1} \right\rfloor=2 + \left\lfloor \frac{ab - (3b-1)}{2b-1} \right\rfloor$ pairwise disjoint sets of rows, each of which must contain an anchor in each of the $2+\left\lfloor \frac{ab - 3k-(3k-1)}{4k-1} \right\rfloor = 2+\left\lfloor \frac{ab - (3a-1)}{2a-1} \right\rfloor $ pairwise disjoint sets of columns. This leaves us with a total of $ \left\lfloor 2+ \frac{ab - (3b-1)}{2b-1} \right\rfloor \left\lfloor 2+ \frac{ab - (3a-1)}{2a-1} \right\rfloor$ disjoint regions on the board each of which must contain an anchor. Notice this is the same as in the even/odd case. 
                
                \textbf{Odd/Odd Case}:  Assume $a,b \geq 2$ and $a=2k+1$ and $b=2l+1$. Following the similar logic as above, we can see there must be an anchor in the top $3l+1$ rows, bottom $3l+1$ rows, and every $4l+1$ rows in between.  There also must be an anchor in the leftmost $3k+1$ columns, the rightmost $3k+1$ columns, and every $4k+1$ columns in between.  Note that every region of $4k+1$ columns and $4l+1$ rows must contain an anchor.  
                
                Therefore, there are $2 + \left\lfloor \frac{ab - (3l+1)-(3l+1)}{4l+1} \right\rfloor=2 + \left\lfloor \frac{ab - (3b-1)}{2b-1} \right\rfloor$ pairwise disjoint sets of rows, each of which must contain an anchor in each of the $2+\left\lfloor \frac{ab - (3k+1)-(3k+1)}{4k+1} \right\rfloor = 2+\left\lfloor \frac{ab - (3a-1)}{2a-1} \right\rfloor $ pairwise disjoint sets of columns. This leaves us with a total of $ \left\lfloor 2+ \frac{ab - (3b-1)}{2b-1} \right\rfloor \left\lfloor 2+ \frac{ab - (3a-1)}{2a-1} \right\rfloor$ disjoint regions on the board each of which must contain an anchor. Notice this is the same as in the previous two cases. 
                
                Since all three cases give the same inequality, we will now proceed independent of the parity of $a$ and $b$.

        
                Now we need to show the equality of our upper and lower bound on the fixed clumsy packing number. 
                
                To accomplish this, we call upon a well known equality for floor and ceiling functions: 
        
                \begin{equation*}
                    \left\lceil \frac{T}{B}\right\rceil =\left\lfloor \frac{T+B-1}{B}\right\rfloor.
                \end{equation*}
                
                From this we can easily show that,
                \begin{equation*}
                    \left\lceil \frac{T}{B}\right\rceil =\left\lfloor \frac{T-1-B}{B}\right\rfloor +2 .
                \end{equation*}
                
                With this we are able to see
        
                \begin{align*}
                    \left\lceil \frac{ab-a+1}{2a-1} \right\rceil
                    &= \left\lfloor\frac{(ab-a+1)-1-(2a-1)}{2a-1}\right\rfloor+2\\
                    &= \left\lfloor\frac{ab-(3a-1)}{2a-1}\right\rfloor+2
                \end{align*}
                
                and similarly 
                \begin{align*}
                    \left\lceil \frac{ab-b+1}{2b-1} \right\rceil
                    &= \left\lfloor\frac{ab-(3b-1)}{2b-1}\right\rfloor+2.
                \end{align*}
                
                Thus we are able to justify that
                $$\CPFI{R_{a,b}} \geq \left\lceil \frac{ab-a+1}{2a-1} \right\rceil \left\lceil \frac{ab-b+1}{2b-1} \right\rceil.$$
            \end{proof}
        
        The clumsy free packing for general \textit{rectangular} polyominoes turned out to be more difficult because of the edge effects. We pose this as an open question in Section \ref{sect:questions}. 
        
    \subsection{$L$ Polyominoes}
    
    As noted in the introduction, previous work has been completed for \textit{hooks} which is a special case of a $L$ polyomino.  
    
    
            \begin{definition}\label{def:L polyomino}
                An \textit{L polyomino}, $L_{a,b}$, is a polyomino where $0 < a \leq b$ such that $L_{a,b} = \{C_{i,1} : 1 \leq i \leq a+1\} \cup\{C_{1,j} : 1 < j \leq b+1 \}$. The $|L_{a,b}| = a + b + 1$.  We will define the \textit{anchor} of the polyomino to be the cell $C_{1,1}$.
            \end{definition}
        
        Thus, $L$ polyominoes have legs of length $a+1$ and $b+1$ and have size $a+b+1$. 
            
        We will define $LR_{a,b}$, $LR^2_{a,b}$, and $LR^3_{a,b}$ to be a $90^{\circ}, 180^{\circ}$, and $270^{\circ}$ clockwise rotation of $L_{a,b}$ respectively.  So we have:
        \begin{itemize}
            \item $LR_{a,b} = \{C_{i,1} : 1 \leq i \leq b+1\} \cup \{C_{b+1,j} : 1 \leq j \leq a+1 \}$
        which will have anchor at cell $C_{b+1, 1}$,
            \item $LR^2_{a,b} = \{ C_{i,b+1} : 1 \leq i \leq a+1\} \cup \{C_{a+1,j} : 1 \leq j \leq b+1 \}$
        which will have anchor at cell $C_{a+1, b+1}$, and 
            \item $LR^3_{a,b} = \{ C_{i,a+1} : 1 \leq i \leq b+1\} \cup \{C_{1,j} : 1 \leq j \leq a+1 \}$
        which will have anchor at cell $C_{1, a+1}.$ 
        \end{itemize}
        An example of an $L$ polyomino rotated is provided in Figure \ref{fig:rotations}.
    
        \begin{figure}[h!]
            \centering \includegraphics[width=.8\linewidth]{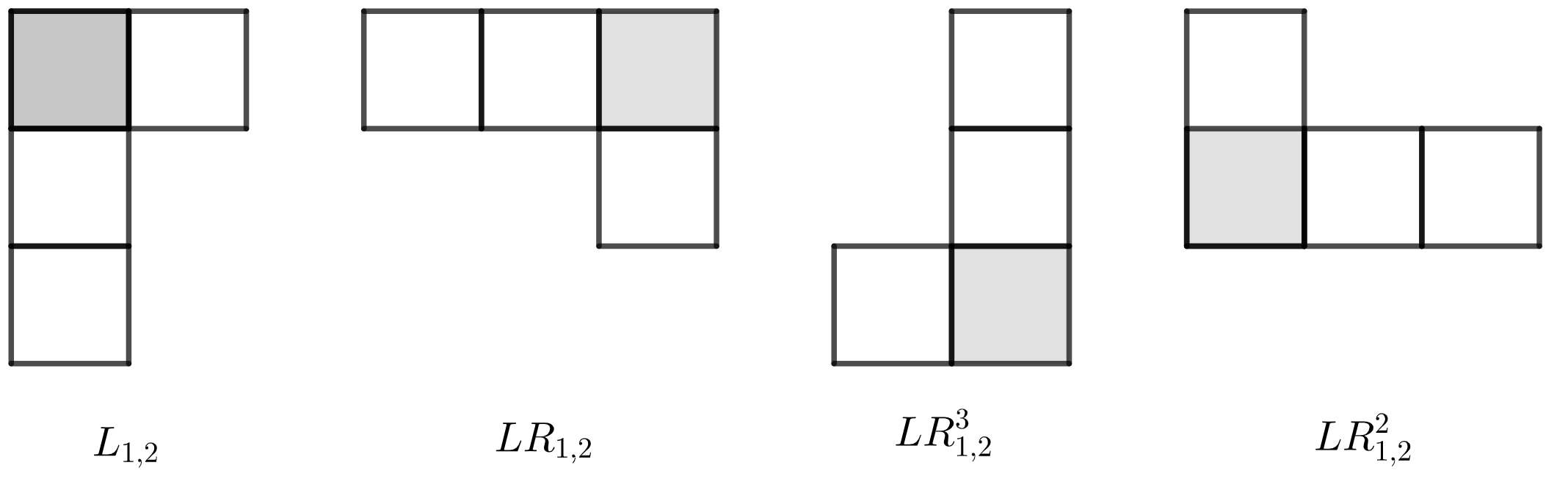}
            \caption{This figure demonstrates $L_{1,2}$, $LR_{1,2}$, $LR^2_{1,2}$, and $LR^3_{1,2}$. The shaded cell is the anchor.}
            \label{fig:rotations}
        \end{figure}
        
        For simplicity, we will use $L_{a,b}, LR_{a,b},$ etc. to describe the size and orientation of the polyomino and use the anchor to describe its location which is just a shift of $L_{a,b}, LR_{a,b}$, etc..   
        
        
        The packing of $L$ polyominoes may seem straight forward, but there are some intricacies we must consider.  The case where $a=b$ is a special case as seen in Theorem \ref{thm:fixed L polyominoes a=b} for clumsy fixed packing and Theorem \ref{thm:free L polyominoes a=b} for clumsy free packing. 
        
            \begin{theorem}\label{thm:fixed L polyominoes a=b} 
                For a $L$ polyomino, $L_{a,a}$, $\CPFI{L_{a,a}} = 1$.
            \end{theorem}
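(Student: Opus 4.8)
The plan is to prove the two inequalities $\CPFI{L_{a,a}}\ge 1$ and $\CPFI{L_{a,a}}\le 1$ separately, where throughout $n=|L_{a,a}|=2a+1$ and $a\ge 1$. The lower bound is immediate: $L_{a,a}$ itself (anchor at $C_{1,1}$) fits on the $n\times n$ board, since each leg has length $a+1\le 2a+1$, so the empty arrangement is never maximal and every fixed packing contains at least one polyomino.

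For the upper bound I would exhibit a single placement that is already maximal, so that no second copy can be added. First I would record which shifts of $L_{a,a}$ fit on the board at all: writing a fixed copy as $L_{a,a}+(c,d)$, the horizontal leg occupies row $Y_{1+d}$ in columns $1+c$ through $a+1+c$ and the vertical leg occupies column $X_{1+c}$ in rows $2+d$ through $a+1+d$, so all cells lie on the board exactly when $0\le c\le a$ and $0\le d\le a$. Thus the anchor $C_{1+c,1+d}$ of any valid fixed copy lies in the top-left $(a+1)\times(a+1)$ corner. The key observation is then that, since $0\le c\le a$, the column interval $[\,1+c,\;a+1+c\,]$ swept by the horizontal leg always contains the middle column index $a+1$; hence every valid copy covers a cell of the segment $S=\{C_{a+1,j}:1\le j\le a+1\}$, i.e.\ of the top $a+1$ cells of the middle column $X_{a+1}$.

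Now I would place the single copy $L_{a,a}+(a,0)$, with anchor $C_{a+1,1}$. Its horizontal leg is row $Y_1$ in columns $a+1,\dots,2a+1$ and its vertical leg is column $X_{a+1}$ in rows $2,\dots,a+1$, so it fits on the board and in fact covers exactly $S$ together with part of row $Y_1$. By the key observation, every valid copy meets $S$ and hence meets this placed copy, so the one-element arrangement is a fixed packing; therefore $\CPFI{L_{a,a}}\le 1$, and with the lower bound this gives $\CPFI{L_{a,a}}=1$.

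The only genuine obstacle here is guessing the right placement. The "obvious" symmetric choice — anchor at the center cell $C_{a+1,a+1}$ — fails, because it leaves an empty $(a+1)\times(2a+1)$ strip along the top edge into which another $L_{a,a}$ slides. The idea that repairs this is exactly the corner-confinement of valid anchors: because every admissible copy has its anchor in one $(a+1)\times(a+1)$ corner, every admissible horizontal leg must cross the single middle column, so a copy whose \emph{vertical} leg fills that middle column over the reachable rows is forced to block all of them. Everything else is a routine interval check.
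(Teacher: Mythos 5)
Your proof is correct and follows essentially the same route as the paper: it places a single copy with anchor at $C_{a+1,1}$ and argues that every other admissible fixed copy must have its anchor in the top-left $(a+1)\times(a+1)$ region, where its horizontal leg necessarily meets the blocked middle column. Your write-up is in fact slightly more explicit than the paper's (which argues via the absence of $a+1$ consecutive empty horizontal cells in the top rows), but the idea and the placement are the same.
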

            
            \begin{proof}
                Place the $L$ polyomino so that the anchor is on the cell $C_{a+1,1}$ as seen in Figure \ref{fig:fixedLequal}.  Notice that if we want to include another $L$ polyomino, we must have the anchor in the first $a$ rows and $a$ columns.  However, we must have at least $a+1$ consecutive empty horizontal cells which does not happen in the first $a$ rows.  
    
                \begin{figure}[h!]
                    \centering
                    \includegraphics[width=.4\linewidth]{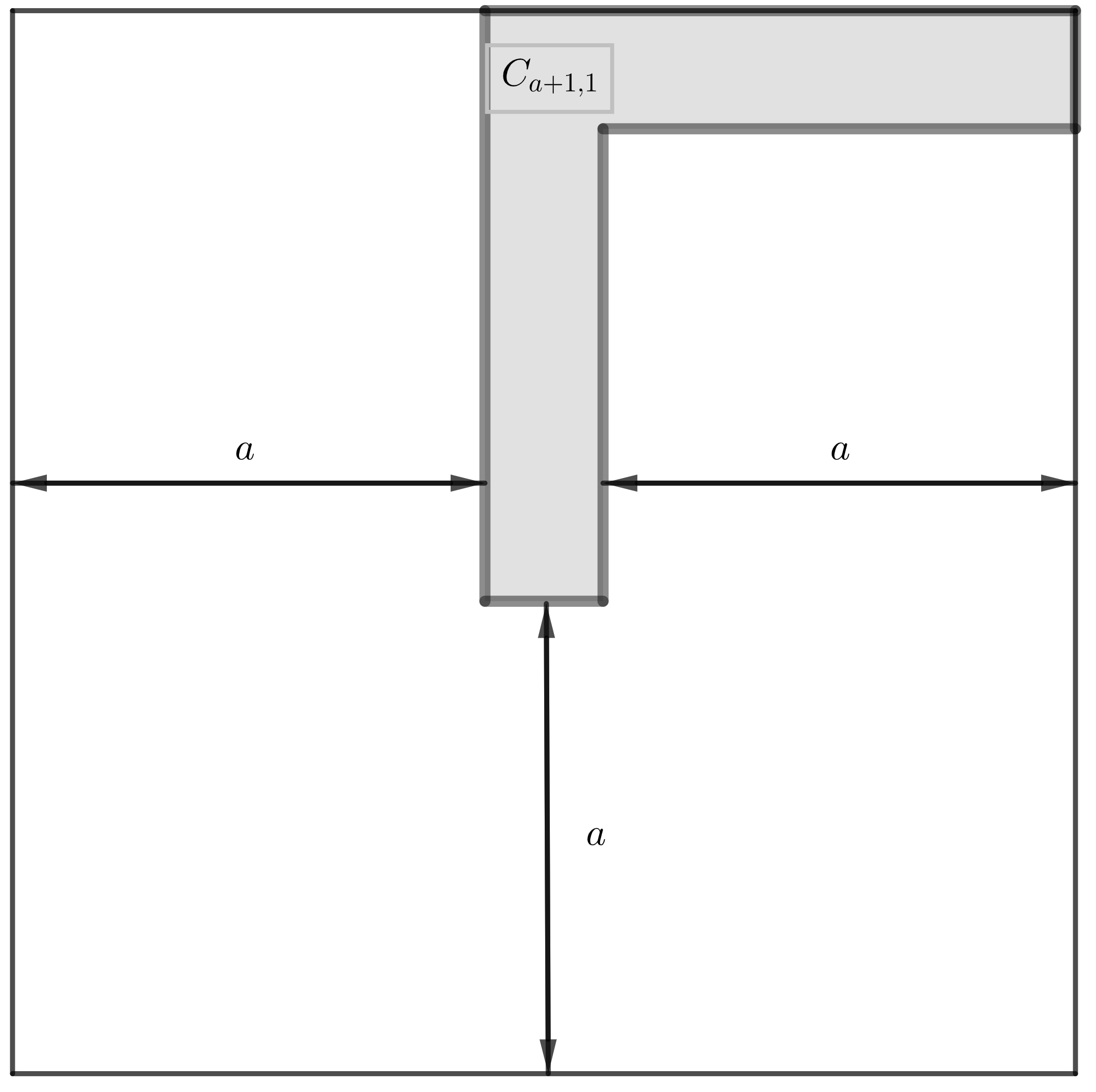}
                    \caption{This figures shows the size of the empty regions after placing the anchor of $L_{a,a}$ in $C_{a+1,1}$.}
                    \label{fig:fixedLequal}
                \end{figure}
            \end{proof}
            
        A general formula for $\CPFI{L_{a,b}}$ is conjectured in section \ref{sect:questions}.  Notice that $\CPFI{L_{a,b}}$ could be small, as in the case when $a=b$.  However when $a$ and $b$ are dramatically different, for example $a=1$ and $b>>a$, then every set of three adjacent columns must contain an anchor. So our clumsy fixed packing number is at least $\lfloor \frac{b}{3} \rfloor$.  
        
        Now we turn our attention to free packing.  We provide bounds for $L_{a,b}$ as well as show that $\CPFR{L_{a,a}} = 2.$
            
            \begin{lemma}\label{lem:freeL2}
                For a $L$ polyomino, $L_{a,b}$,  $\CPFR{L_{a,b}} \geq 2.$
            \end{lemma}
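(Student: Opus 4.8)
The guiding observation is that \emph{every} copy of an $L$ polyomino lies inside the union of one column and one row --- the column $X_c$ and the row $Y_r$ through its anchor $C_{c,r}$ --- since one leg occupies part of $X_c$ and the other part of $Y_r$. Consequently, to enlarge a single-polyomino arrangement it suffices to produce one column and one row that the given polyomino misses completely, together with a rotation of $L_{a,b}$ that fits into that ``cross.''

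Here is the plan. Every free packing of $L_{a,b}$ is nonempty (one copy of $L_{a,b}$, anchored at $C_{1,1}$, already fits on $\BoardB$ because its bounding box is $(a+1)\times(b+1)$ and $a+1,b+1\le n$), so it is enough to show that no one-element set $\{\PolyP_1\}$ with $\PolyP_1\FreeEquiv L_{a,b}$ is maximal; that forces every free packing to have at least two members, i.e.\ $\CPFR{L_{a,b}}\ge 2$. So fix such a $\PolyP_1$ on $\BoardB$, $n=a+b+1$. Being a shift of one of $L_{a,b},LR_{a,b},LR^2_{a,b},LR^3_{a,b}$, it occupies a contiguous block $I_c$ of columns and a contiguous block $I_r$ of rows, each of size $a+1$ or $b+1$ and hence of size at most $b+1=n-a\le n-1$. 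Thus $I_c$ is a proper subinterval of $\{1,\dots,n\}$, so its complement contains $1$ or $n$; likewise for $I_r$. Also, any column whose index is not in $I_c$, and any row whose index is not in $I_r$, is entirely free of $\PolyP_1$.

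Then I pick $c^\ast\in\{1,n\}\setminus I_c$ and $r^\ast\in\{1,n\}\setminus I_r$ and place $\PolyP_2\FreeEquiv L_{a,b}$ with anchor at $C_{c^\ast,r^\ast}$, using the rotation whose two legs point into the board: $L_{a,b}$ for the corner $(1,1)$, $LR_{a,b}$ for $(n,1)$, $LR^2_{a,b}$ for $(n,n)$, $LR^3_{a,b}$ for $(1,n)$. In each of the four cases one checks directly that the two legs, of lengths $a+1$ and $b+1$, stay within $\{1,\dots,n\}$ in each coordinate --- for instance, at $(1,1)$ with orientation $L_{a,b}$ the legs occupy columns $1,\dots,a+1$ and rows $1,\dots,b+1$. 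Since $\PolyP_2\subseteq X_{c^\ast}\cup Y_{r^\ast}$ and both $X_{c^\ast}$ and $Y_{r^\ast}$ avoid $\PolyP_1$, the arrangement $\{\PolyP_1,\PolyP_2\}$ is valid, so $\{\PolyP_1\}$ is not a free packing.

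The only real labor is the case check in the last paragraph, confirming that each of the four displaced rotations actually sits on the board; I would record this in a small table (corner, rotation, the two occupied segments) rather than in prose. There is no genuine obstacle --- the value $n=a+b+1$ is exactly what makes each leg fit --- and, unlike the rectangular case, the parities of $a$ and $b$ play no role.
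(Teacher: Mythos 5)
Your proof is correct, and it reaches the conclusion by a somewhat different mechanism than the paper. The paper argues by contradiction with a case analysis on the anchor $C_{x,y}$ of the (WLOG unrotated) single copy: if $x,y>1$ it nests a second $L_{a,b}$ diagonally at $C_{x-1,y-1}$, and if $x=1$ or $y=1$ it places a rotated copy ($LR^2_{a,b}$) at an explicitly named cell. You instead observe that any copy of $L_{a,b}$, in any orientation, meets only a contiguous block of at most $b+1=n-a\leq n-1$ columns and a contiguous block of at most $n-1$ rows, so some boundary column $X_{c^\ast}$ and boundary row $Y_{r^\ast}$ are entirely empty; since each orientation of the $L$ is contained in the union of the column and row through its anchor, the corner-oriented rotation anchored at $C_{c^\ast,r^\ast}$ fits inside $X_{c^\ast}\cup Y_{r^\ast}$ and is disjoint from the given copy. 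Your route buys uniformity: it treats all four orientations of the placed polyomino at once (no WLOG reduction), avoids the interior/boundary case split, and makes the on-board check immediate from $a+1,b+1\leq n$, at the cost of a short four-corner table; the paper's version is terser per case and ties directly to its figures. One small point worth stating explicitly in your write-up is the one you already flag in passing: minimality of $\CPFR{L_{a,b}}$ is over free packings, and the empty arrangement is not a free packing because at least one copy fits on the board, so non-maximality of every singleton indeed forces $\CPFR{L_{a,b}}\geq 2$.
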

            
            \begin{proof}
                It suffices to show that there does not exist a clumsy packing of size 1.  
                Assume by contradiction, there exist two positive integers $a$ and $b$ so that $\CPFR{L_{a,b}} = 1$. Without loss of generality, we will use $L_{a,b}$ without any rotations.  
                
                Assume the anchor is on cell $C_{x,y}$. If $y>1$ and $x>1$ then we can fit another $L_{a,b}$ with anchor at cell $C_{x-1, y-1}$.  
                Assume $y = 1$. We can fit an $LR^2_{a,b}$ with anchor at cell $C_{x+a, b+2}$. A similar argument holds for $x=1.$ See Figure \ref{fig:LFreeGEQ2} for an example. 
                
                 \begin{figure}[h!]
                    \centering
                    \includegraphics[width=.4\linewidth]{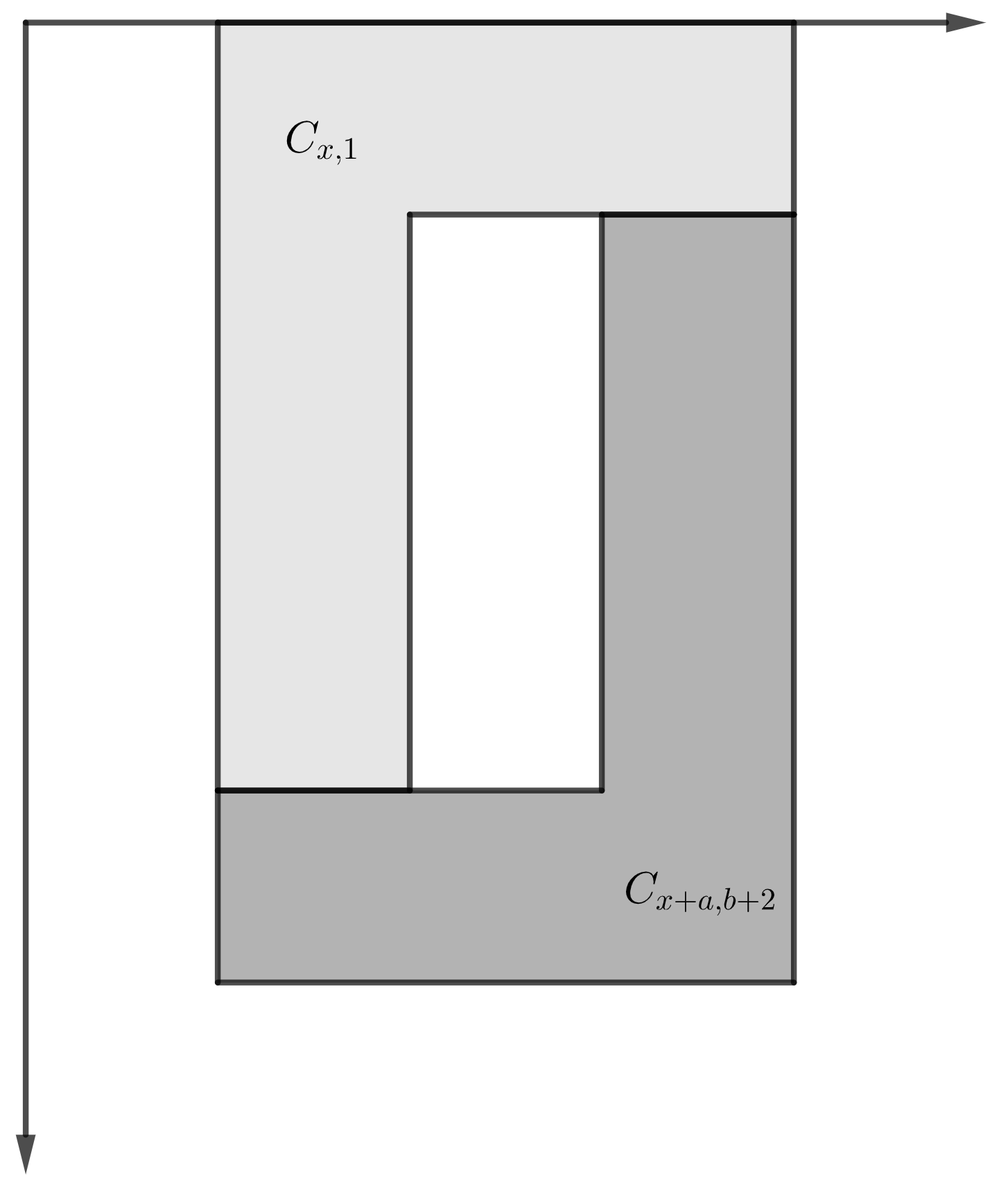}
                    \caption{If $y=1$, then we can fit $LR^2_{a,b}$ with anchor at cell $C_{x+a, b+2}$.}
                    \label{fig:LFreeGEQ2}
                \end{figure}
            \end{proof}
        
            \begin{theorem}\label{thm:free L polyominoes a=b}
                For a $L$ polyomino, $L_{a,a}$, $\CPFR{L_{a,a}} = 2.$
            \end{theorem}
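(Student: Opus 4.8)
The lower bound $\CPFR{L_{a,a}}\ge 2$ is already supplied by Lemma~\ref{lem:freeL2} (with $b=a$), so the whole task is to produce a single free packing of $L_{a,a}$ on the $(2a+1)\times(2a+1)$ board that uses exactly two copies; this forces $\CPFR{L_{a,a}}\le 2$, and equality follows. The organizing observation I would use is that, on a board with $2a+1$ rows and columns, any $a+1$ consecutive columns necessarily contain the central column $X_{a+1}$, and symmetrically any $a+1$ consecutive rows contain $Y_{a+1}$. Since every polyomino free equivalent to $L_{a,a}$ consists of a horizontal leg of $a+1$ consecutive cells in one row together with a vertical leg of $a+1$ consecutive cells in one column, the two meeting at its anchor, every placed copy must meet $X_{a+1}$ somewhere on its horizontal leg and meet $Y_{a+1}$ somewhere on its vertical leg. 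So it is enough to place two copies that together ``guard'' this central cross.

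Concretely, the plan is to take $\PolyP_1=LR^3_{a,a}$ anchored at $C_{a+1,a+1}$ and $\PolyP_2=LR^3_{a,a}$ anchored at $C_{a,\,2a+1}$. Unwinding the definitions, $\PolyP_1$ occupies the cells of column $X_{a+1}$ in rows $Y_1,\dots,Y_{a+1}$ together with the cells of row $Y_{a+1}$ in columns $X_{a+1},\dots,X_{2a+1}$, and $\PolyP_2$ occupies the cells of column $X_{a}$ in rows $Y_{a+1},\dots,Y_{2a+1}$ together with the cells of row $Y_{2a+1}$ in columns $X_a,\dots,X_{2a}$. First I would check (a routine coordinate computation, best displayed in a figure) that $\PolyP_1,\PolyP_2\subseteq\BoardB$ and $\PolyP_1\cap\PolyP_2=\emptyset$, so that $\PolySet=\{\PolyP_1,\PolyP_2\}$ is a valid arrangement.

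Next I would prove maximality: suppose $\PolyPStar\FreeEquiv L_{a,a}$ together with $\PolySet$ were still a valid arrangement, with $\PolyPStar$ anchored at $C_{c,r}$, horizontal leg in row $Y_r$ and vertical leg in column $X_c$. By the observation, $C_{a+1,r}\in\PolyPStar$ and $C_{c,a+1}\in\PolyPStar$; if $r\le a+1$ then $C_{a+1,r}\in\PolyP_1$, and if $c\ge a+1$ then $C_{c,a+1}\in\PolyP_1$, each a contradiction, so $r\ge a+2$ and $c\le a$. Because $r\ge a+2$ the vertical leg cannot run downward from the anchor without leaving the board, and because $c\le a$ the horizontal leg cannot run leftward; hence $\PolyPStar$ must be a translate of $LR^3_{a,a}$, with horizontal leg $Y_r\cap\{X_c,\dots,X_{c+a}\}$ and vertical leg $X_c\cap\{Y_{r-a},\dots,Y_r\}$. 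Since $1\le c\le a$, the column $X_a$ lies among $X_c,\dots,X_{c+a}$, so $C_{a,r}\in\PolyPStar$; but $r\in\{a+2,\dots,2a+1\}$, so $C_{a,r}\in\PolyP_2$, contradicting disjointness. Therefore no such $\PolyPStar$ exists, $\PolySet$ is a free packing of size $2$, and $\CPFR{L_{a,a}}\le 2$; with the lemma this yields $\CPFR{L_{a,a}}=2$. The hard part is really isolating the right pair $\PolyP_1,\PolyP_2$: their orientations and anchors must be chosen so that $\PolyP_1$ alone already destroys every placement whose legs reach the top half of $X_{a+1}$ or the right half of $Y_{a+1}$, leaving only $LR^3$-shaped copies trapped in the lower-left block, and so that $\PolyP_2$'s vertical arm falls exactly on column $X_a$, the one column every such trapped copy is forced to cross. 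Getting this alignment right against the board edges is the only delicate point; once it is set up, the verification is the short argument above.
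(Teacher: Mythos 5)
Your proof is correct and takes essentially the same approach as the paper: the lower bound is quoted from Lemma~\ref{lem:freeL2}, and your two copies of $LR^3_{a,a}$ anchored at $C_{a+1,a+1}$ and $C_{a,2a+1}$ are exactly the paper's two-piece configuration (two $L_{a,a}$'s anchored at $C_{a+1,a+1}$ and $C_{a,1}$) reflected across the middle row of the board. The only difference is presentational: your maximality check, via the observation that any $a+1$ consecutive rows or columns of the $(2a+1)\times(2a+1)$ board contain the central one, is spelled out more rigorously than the paper's figure-based argument.
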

            
            \begin{proof}
                By Lemma \ref{lem:freeL2}, $\CPFR{L_{a,a}} \geq 2$. Consider placing a $L$ polyomino $L_{a,a}$, call it $L^1$, with anchor at cell $C_{a+1,a+1}$. Place another $L$ polyomino $L_{a,a}$, call it $L^2$, with anchor at cell $C_{a, 1}$. Notice to the left of $L^1$ and $L^2$ there are at most $a$ empty columns which is not enough for another $L$ polyomino. Similarly, notice to the right of the $L^1$ and $L^2$, there are at most $a$ empty rows which is not enough for another $L$ polyomino. As seen in Figure \ref{fig:fixedLa=b}.
                
                \begin{figure}[h!]
                    \centering
                    \includegraphics{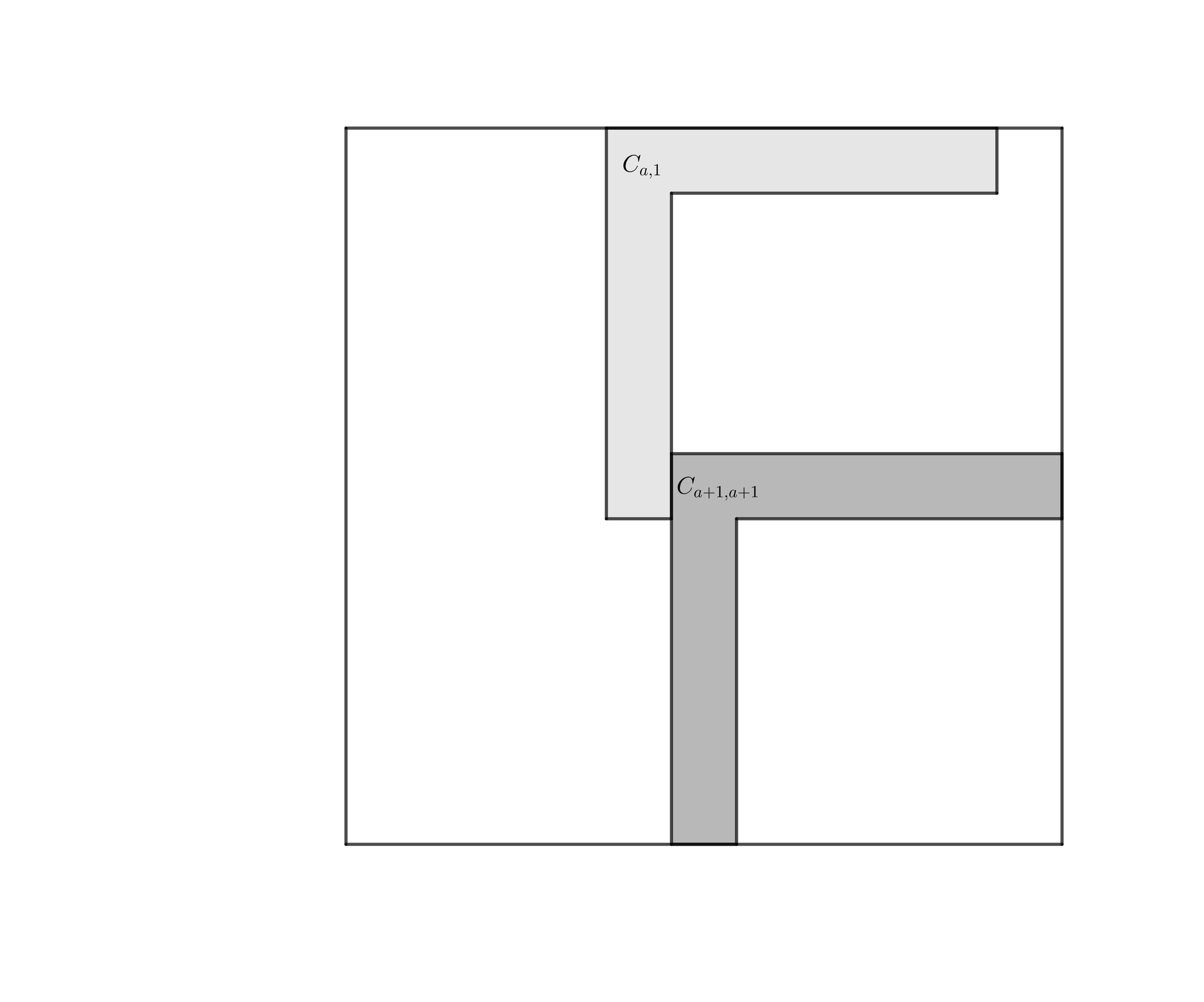}
                    \caption{Free packing showing $\CPFR{L_{a, a}} = 2$.}
                    \label{fig:fixedLa=b}
                \end{figure}
            \end{proof}
        
        For a general $L$ polyomino, the maximum number of free $L$ polyominoes we need is 5.  This is shown in the following theorem. 
        
            \begin{theorem}\label{thm:free L polyominoes}
                For a $L$ polyomino, $L_{a,b}$,  $2 \leq \CPFR{L_{a,b}} \leq 5$.
            \end{theorem}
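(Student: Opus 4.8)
The lower bound $\CPFR{L_{a,b}}\ge 2$ is exactly Lemma~\ref{lem:freeL2}, so everything is in the upper bound, which the plan is to establish by an explicit construction. The guiding observation is that every one of the four rotations of $L_{a,b}$ contains a straight segment of $b+1$ collinear cells (its long leg), and that on an $(a+b+1)\times(a+b+1)$ board a horizontal segment of $b+1$ consecutive cells must occupy every column of the central band $\{a+1,\dots,b+1\}$: a run that long fits neither entirely to the left of column $a+1$ nor entirely to the right of column $b+1$, so its column-interval $[s,s+b]$ (with $1\le s\le a+1$) contains all of $[a+1,b+1]$. The same holds for rows and vertical segments. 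Consequently, \emph{any arrangement that occupies, in every row, at least one cell among columns $a+1,\dots,b+1$ and, in every column, at least one cell among rows $a+1,\dots,b+1$, is already maximal.} The task is to realise such an arrangement --- or something just as good --- with at most five copies of $L_{a,b}$.

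For the construction I would start exactly as in the proof of Theorem~\ref{thm:free L polyominoes a=b}: place two copies $P_1,P_2$ whose long legs are vertical and sit in two distinct columns of the central band, with $P_1$ spanning rows $1,\dots,b+1$ and $P_2$ spanning rows $a+1,\dots,a+b+1$ (the case $a=b$ is already Theorem~\ref{thm:free L polyominoes a=b}, so assume $a<b$). These row-spans cover all $a+b+1$ rows since $a\le b$, and a short check --- as in the $a=b$ case --- shows $P_1$ and $P_2$ can be chosen disjoint and on the board; together they kill every empty horizontal $(b+1)$-run and so block the two rotations ($LR$ and $LR^3$) with a horizontal long leg. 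It then remains to block the rotations $L$ and $LR^2$, whose long leg is vertical, i.e.\ to also place an occupied cell in the central band of every \emph{column}. By symmetry two more copies with horizontal central-band long legs would finish the job, but such a leg is forced to pass through column $a+1$ and therefore collides with $P_1$; I would handle this by shifting those extra pieces off the central rows and supplementing their coverage with the observation that any \emph{surviving} empty vertical $(b+1)$-run is pressed against the top or bottom edge, so its only available perpendicular $(a+1)$-stub points toward a side edge and attaching it would produce a \emph{reflection} of $L_{a,b}$, which our packings do not allow. In the residual cases --- essentially when $b$ is large relative to $a$, where this reflection argument still leaves a gap near the middle of the band --- I would add one more copy, keeping the total at five.

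I expect the real difficulty to be precisely this last point. The two requirements ``occupy a central-band cell in every row'' and ``\dots in every column'' interact inside the $(b-a+1)\times(b-a+1)$ block of central rows and columns, and fitting honest, pairwise-disjoint, on-board $L$-shapes there (or substituting the reflection argument for part of them) is delicate, and the shape of the argument --- how many pieces are needed and where they go --- changes with the ratio $b/a$ and, in the borderline subcases, with the parities of $a$ and $b$; so the verification of maximality splits into several cases. That $L_{a,a}$ needs only two pieces shows that five is far from tight in general, but a uniform bound of five should fall out of the construction above once the centre of the board is dealt with.
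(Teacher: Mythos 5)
Your lower bound is fine (it is just Lemma~\ref{lem:freeL2}, as in the paper), and your ``central band'' observation is correct as far as it goes: every rotation of $L_{a,b}$ contains a leg of $b+1$ collinear cells, and on the $(a+b+1)\times(a+b+1)$ board such a leg must meet columns (resp.\ rows) $a+1,\dots,b+1$, so an arrangement hitting the central band in every row and every column is maximal. But the theorem's entire content is an explicit arrangement of at most five copies that is maximal, and you never produce one. Your two vertical pieces $P_1,P_2$ do block the rotations with horizontal long legs, but the step that blocks the vertical long legs is exactly where you stop: you acknowledge that a horizontal central-band leg collides with $P_1$, and the substitute ``reflection'' argument is not sound as stated --- an empty vertical $(b+1)$-run pressed against the top or bottom edge can still be completed by an \emph{allowed} rotation ($L$ needs its $(a+1)$-stub at the top pointing right, $LR^2_{a,b}$ needs it at the bottom pointing left), and whether these fit depends on where the runs actually are, which you have not pinned down. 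Saying ``in the residual cases I would add one more copy'' without specifying its position, verifying disjointness, or verifying that nothing more fits leaves the upper bound unproved; the case analysis you defer (depending on $b/a$ and parities) is the proof.

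For comparison, the paper avoids your ``cover both bands'' strategy entirely. It places four copies in a pinwheel confined to a $(b+2)\times(b+2)$ square, so the hole enclosed by them is too small for any rotation of $L_{a,b}$, notes that the strips left outside (the bottom $a-1$ rows and rightmost $a-1$ columns, and more generally the remaining region) are handled by adding a single fifth copy, an $LR^2_{a,b}$ with anchor at $C_{a+b+1,\,b+3}$, and then checks maximality region by region (each leftover region is too narrow or too short to contain a $(b+1)$-leg together with its stub), using $a\le b$. If you want to salvage your route, you would need to exhibit concrete anchors for all five pieces and carry out the region check; as written, the proposal is a plan with the decisive step missing.
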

            
            \begin{proof}
                By Lemma \ref{lem:freeL2}, $2 \leq \CPFR{L_{a,b}}$.
                A valid arrangement of five $L$ polyominoes can always be constructed in the manner shown in Figure \ref{fig:5LPacking}, where the four $L$ polyominoes on the left are embedded into a square of sizelength $b+2$.  Thus, there can not be another $L$ polyomino in the center of these four $L$ polyominoes. The bottom most  $a-1$ rows and right most $a-1$ columns will be empty. However, placing a $LR^2_{a,b}$ with anchor at $C_{a+b+1, b+3}$ will create a free packing since $a \leq b$.  Therefore, $2 \leq \CPFR{L_{a,b}} \leq 5$.
            
                \begin{figure}[h!]
                    \centering
                    \includegraphics[width = .4\linewidth]{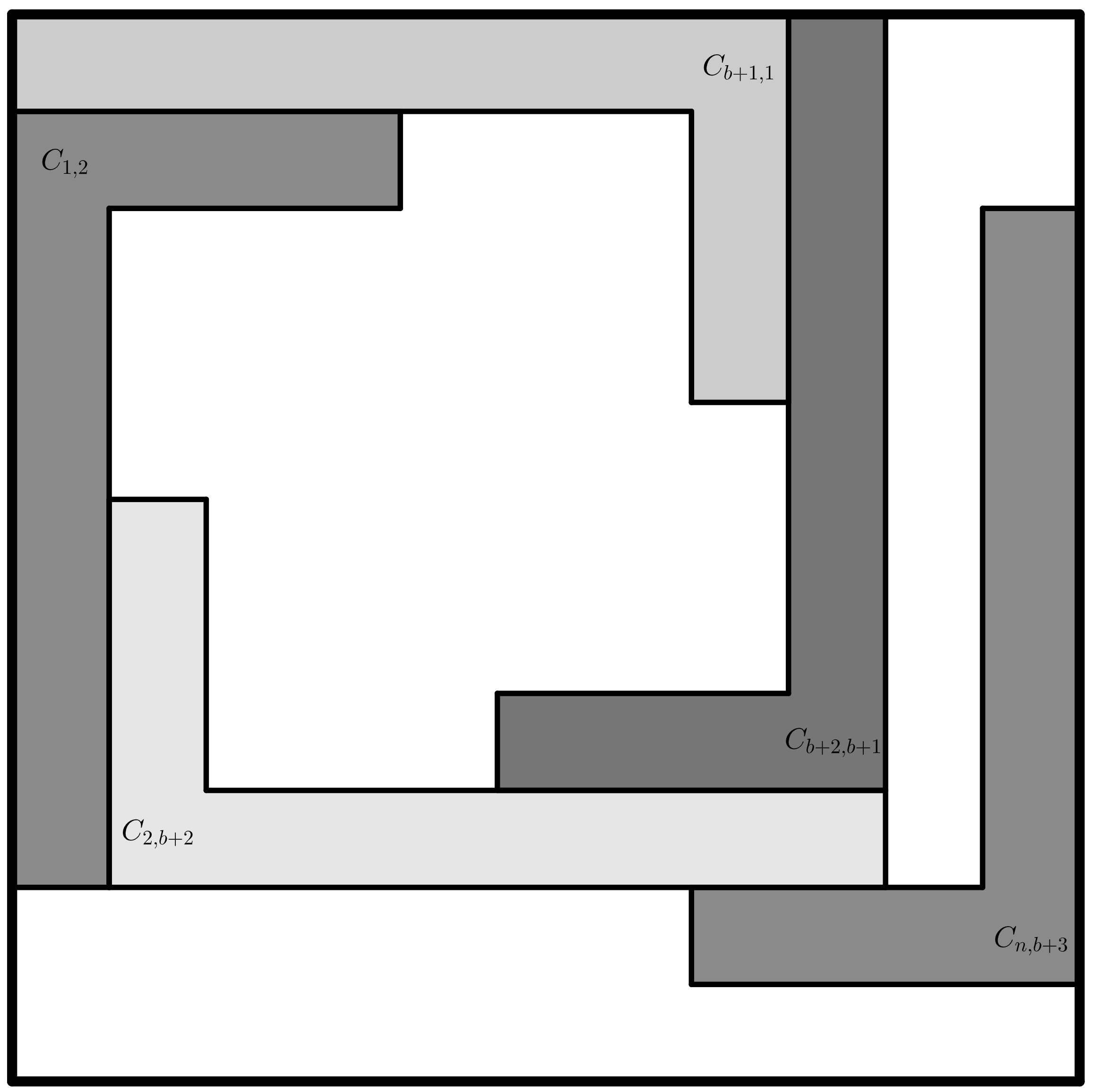}
                    \caption{General pattern to show $\CPFR{L_{a,b}} \leq 5$.}
                    \label{fig:5LPacking}
                \end{figure}
            \end{proof}
        
        With these bounds, we know the clumsy free packing number is often less than the upper bound.  For example, consider $a=3, b=6$ and $n=10.$ We can place three copies $L_{3, 6}$ with anchor at $C_{2, 1}, C_{3, 4},$ and $C_{7,4}$.  This creates a free packing of $L_{3,6}$ whose packing number is 3.  A similar arrangement can be made to find a free packing of $L_{2, 7}$ whose packing number is 4. We conjecture that these are the clumsy free packing numbers for $L_{3,6}$ and $L_{2,7}$ respectively.  
        
            \begin{corollary}\label{col:free L polyominoes a=1}
                For a $L$ polyomino, $L_{1,b}$, $2 \leq \CPFR{L_{1,b}} \leq 4$.
            \end{corollary}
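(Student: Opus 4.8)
Since Lemma~\ref{lem:freeL2} already gives $\CPFR{L_{1,b}}\ge 2$, the plan is to establish the upper bound $\CPFR{L_{1,b}}\le 4$, sharpening the bound of Theorem~\ref{thm:free L polyominoes} by exploiting that the short leg of $L_{1,b}$ has length only $2$. Recall $n=a+b+1=b+2$. The reduction I would make first is this: every copy of $L_{1,b}$, in any of the four orientations and anywhere on the board, consists of a ``long leg'' of $b+1$ collinear cells together with a single extra ``foot'' cell, so placing a new copy requires $b+1$ consecutive empty cells in some row or some column. Hence it is enough to produce a valid arrangement $\PolySet$ of four copies of $L_{1,b}$ in which no row and no column contains $b+1$ consecutive empty cells.

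To build $\PolySet$ I would use a pinwheel. Place the four long legs around the border ring of the $(b+2)\times(b+2)$ board --- one in the left column (rows $2$ through $b+2$), one in the top row (columns $1$ through $b+1$), one in the right column (rows $1$ through $b+1$), and one in the bottom row (columns $2$ through $b+2$) --- each shifted by one cell so that consecutive legs do not collide at the corners. One checks these four legs are pairwise disjoint and together cover the whole outer ring. For each leg there is a unique orientation of $L_{1,b}$ that realizes it without running off the board, and that orientation forces the foot to sit just inside the ring, so the four feet land on the four ``corner'' cells of the interior $b\times b$ block, namely $C_{2,2},\ C_{b+1,2},\ C_{b+1,b+1},\ C_{2,b+1}$. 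Concretely I would take $L^1=L_{1,b}+(0,1)$, $L^2=LR_{1,b}$, $L^3=LR^2_{1,b}+(b,0)$, and $L^4=LR^3_{1,b}+(1,b)$, and verify directly that $\PolySet=\{L^1,L^2,L^3,L^4\}$ is a valid arrangement.

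For maximality I would argue as follows. Since the border ring is entirely covered, every empty cell lies in the interior $b\times b$ block (rows and columns $2$ through $b+1$) and is none of the four interior corners; consequently in each column the empty cells lie among rows $2$ through $b+1$, a run of at most $b$ cells, while rows $1$ and $b+2$ are always occupied, so there is no vertical run of $b+1$ empty cells. Because the whole configuration is invariant under a $90^{\circ}$ rotation (one checks $L^1\mapsto L^2\mapsto L^3\mapsto L^4\mapsto L^1$), the same bound holds for rows. By the reduction above, no further $L_{1,b}$ fits, so $\PolySet$ is a free packing, and $\CPFR{L_{1,b}}\le\FreePackingNumber{\PolySet}=4$; together with Lemma~\ref{lem:freeL2} this completes the proof.

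I expect the genuine work to be bookkeeping rather than any real idea: checking that the four pieces are pairwise disjoint, that every foot lands on the board in the stated position, and handling the degenerate small cases. For $b=2$ the four pieces in fact tile the board exactly (so maximality is automatic), and for $b=1$ the board has only nine cells, so no valid arrangement of four trominoes exists and $\CPFR{L_{1,1}}\le 3\le 4$ holds trivially; these should be dispatched separately before presenting the general construction. Including a figure for a representative value such as $b=4$ or $b=5$ will make the pinwheel and the small central gap visually transparent.
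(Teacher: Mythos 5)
Your proposal is correct and follows essentially the same route as the paper: after invoking Lemma~\ref{lem:freeL2} for the lower bound, you exhibit four copies of $L_{1,b}$ (a pinwheel covering the border of the $(b+2)\times(b+2)$ board) so that no row or column retains $b+1$ consecutive empty cells, which is exactly the property the paper's Figure~\ref{fig:4LPacking} construction relies on. The only cosmetic difference is the case $b=1$, which the paper settles via Theorem~\ref{thm:free L polyominoes a=b} giving $\CPFR{L_{1,1}}=2$, while you use an area bound; both suffice.
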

        
            \begin{proof}
                Note that if $a = b = 1$ then $\CPFR{L_{1,1}} = 2$ via Theorem \ref{thm:free L polyominoes a=b}. Given $1 \ne b$, a free packing of four $L$ polyominoes can always be constructed in the manner shown in Figure \ref{fig:4LPacking}, which has at most $b$ consecutive empty horizontal cells and $b$ consecutive empty vertical cells. Therefore, $2 \leq \CPFR{L_{1,b}} \leq 4$.
                
                \begin{figure}[h!]
                    \centering
                    \includegraphics[width = .4\linewidth]{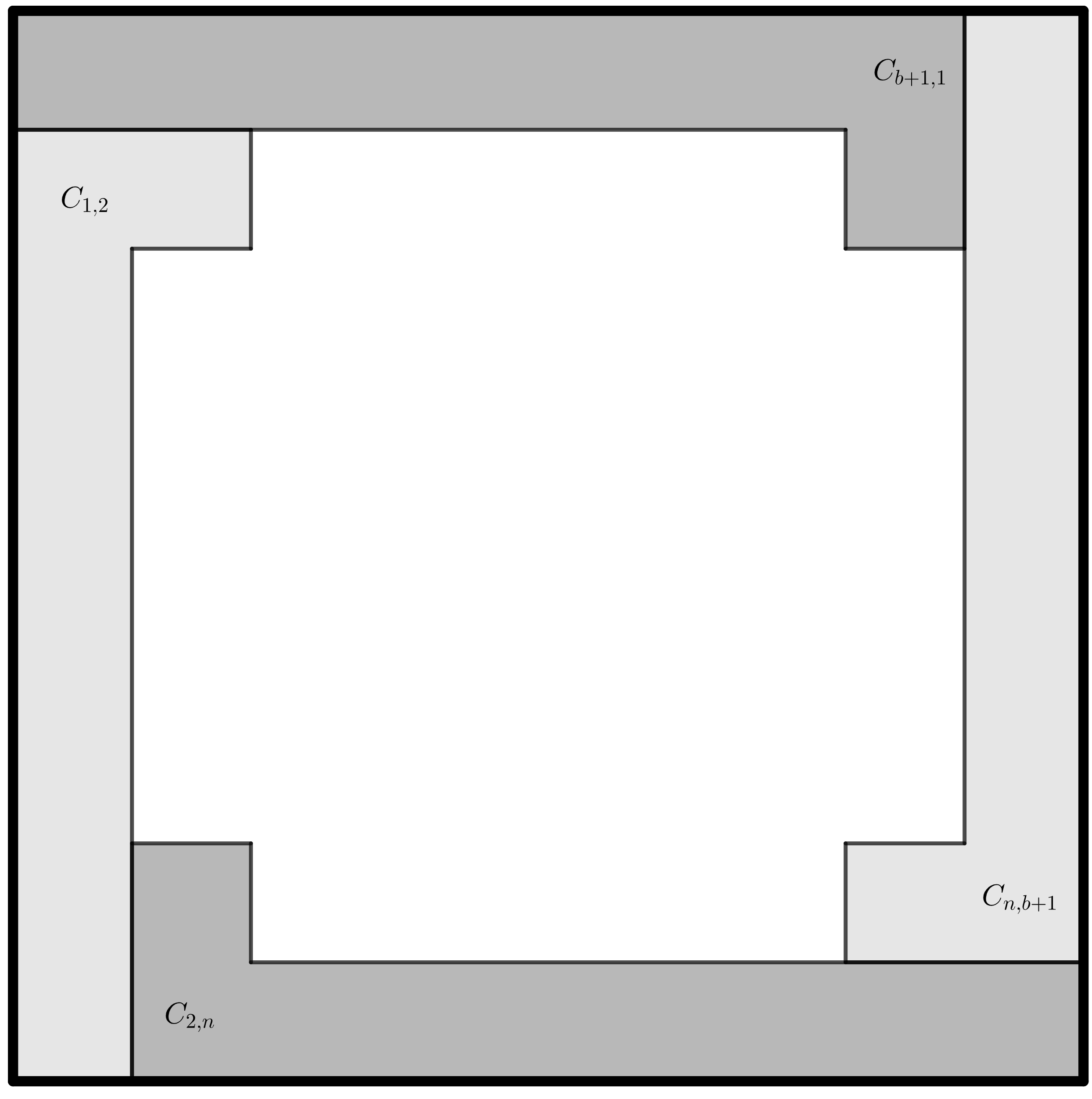}
                    \caption{Shows $\CPFR{L_{1, b}} \leq 4$. }
                    \label{fig:4LPacking}
                \end{figure}
            \end{proof}

    \subsection{$T$ Polyominoes}
    
        A $T$ polyomino is a natural extension of the $L$ polyomino.  
            \begin{definition}\label{def:T polyomino}
                A \textit{$T$ polyomino}, $T_{a,b}$, is a polyomino such that $T_{a,b} = \{ C_{i,1}: 1 \leq i \leq 2a+1\} \cup \{(a+1,j):2 \leq j \leq b+1 \}$. Notice $|T_{a,b}| = 2a + b + 1.$ We will define the \textit{anchor} of the polyomino to be the cell $C_{a+1,1}$.
            \end{definition}
        
        We will define $TR_{a,b}$, $TR^2_{a,b}$, and $TR^3_{a,b}$ to be a $90^{\circ}, 180^{\circ},$ and $270^{\circ}$ clockwise rotation of $T_{a,b}$ respectively. So we have:
        \begin{itemize}
            \item $TR_{a,b} = \{ C_{b+1,i} : 1 \leq i \leq 2a+1 \text{ and } (j,a+1) : 1 \leq j \leq b \}$ which will have anchor at cell $C_{b+1, a+1}$,
            \item $TR^2_{a,b} = \{ C_{i,b+1} : 1 \leq i \leq 2a+1 \text{ and } (a+1,j) : 1 \leq j \leq b \}$ which will have anchor at cell $C_{a+1, b+1}$, and 
            \item $TR^3_{a,b} = \{ C_{1,i} : 1 \leq i \leq 2a+1 \text{ and } (j,a+1) : 2 \leq j \leq b+1 \}$ which will have anchor at cell $C_{1, a+1}.$ 
        \end{itemize} 
        As before, we will use $T_{a,b}, TR_{a,b},$ etc. to describe the size and orientation of the polyomino and use the anchor to describe its location which is just a shift of $T_{a,b}, TR_{a,b}$, etc.. 
        

        To proceed, we will consider two cases for fixed $T$ polyominoes.  The first case, $b \leq 2a$ implies that the $T$ polyomino is as wide or wider than it is tall. The second case, $b \geq 2a+1$ implies that the $T$ polyomino is taller than it is wide.  

            
                

            \begin{theorem} \label{thm:fixed T polyomino b leq 2a} 
                For a \textit{$T$ polyomino}, $T_{a,b}$, if $b \leq 2a $ then $$\CPFI{T_{a,b}} = \left\lceil \frac{2a+1}{2b+1} \right\rceil.$$
            \end{theorem}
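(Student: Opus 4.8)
The plan is to translate the two-dimensional packing condition into a one-dimensional covering statement about the rows. Throughout, note that $n = 2a+b+1$ and that a shifted copy of $T_{a,b}$ has its anchor in column $x \in \{a+1,\dots,a+b+1\}$ and row $y \in \{1,\dots,2a+1\}$, and that such a copy occupies exactly the $b+1$ consecutive rows $y, y+1, \dots, y+b$. The first step is a short incidence lemma obtained by a case analysis on the possible cell incidences (bar--bar, stem--bar): two legally placed copies with anchors $(x_1,y_1)$ and $(x_2,y_2)$ are disjoint whenever $|y_1 - y_2| > b$ (their row-spans miss each other), and they intersect whenever $|y_1 - y_2| \le b$ and $|x_1 - x_2| \le a$. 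Here $b \le 2a$ guarantees that any two legal anchor columns already satisfy $|x_1 - x_2| \le b \le 2a$, which is used when the two bars sit in the same row.

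\textbf{Lower bound.} Set $k = \left\lceil \frac{2a+1}{2b+1} \right\rceil$ and consider the probe rows $\rho_j = 1 + (j-1)(2b+1)$ for $j = 1,\dots,k$. Since $(k-1)(2b+1) \le 2a$, each $\rho_j$ lies in $\{1,\dots,2a+1\}$, and the sets $\{\rho_j - b, \dots, \rho_j + b\}$ are pairwise disjoint. If a clumsy fixed packing used fewer than $k$ copies, then by pigeonhole some set $\{\rho_j - b,\dots,\rho_j + b\}$ contains the anchor row of no packed copy; a new copy with anchor row $\rho_j$ (and any legal anchor column) then has row-span disjoint from every packed copy by the incidence lemma, so it could be added, contradicting maximality. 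Hence $\CPFI{T_{a,b}} \ge k$.

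\textbf{Upper bound.} I would exhibit a maximal arrangement of exactly $k$ copies. Place all anchors in the central column $x_0 = a+1+\lfloor b/2 \rfloor$, at rows $y_i = r + (i-1)(2b+1)$ for $i = 1,\dots,k$, where the offset $r \in \{1,\dots,b+1\}$ is chosen so that $y_1 - b \le 1$ and $y_k + b \ge 2a+1$ and $y_k \le 2a+1$; such an $r$ exists precisely because $2a - 2b \le (k-1)(2b+1) \le 2a$, which is immediate from the definition of $k$. Consecutive anchors are $2b+1 > b$ apart, so by the incidence lemma the $k$ copies are pairwise disjoint; and since $\lceil b/2 \rceil \le a$ (this is where $b \le 2a$ is used) every legal anchor column lies within $a$ of $x_0$, while the intervals $\{y_i - b,\dots,y_i+b\}$ cover all of $\{1,\dots,2a+1\}$. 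Thus any would-be new copy has anchor row within $b$ of some $y_i$ and anchor column within $a$ of $x_0$, so it meets that copy; the arrangement is maximal and $\CPFI{T_{a,b}} \le k$.

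The step I expect to be most delicate is pinning down the incidence lemma with exactly the right hypotheses, in particular the clause $|x_1 - x_2| \le a$: it is what dictates that the construction be stacked in a single centered column, and it is exactly the feature that breaks when $b > 2a$ (the width-$b$ band of legal anchor columns can no longer be dominated by a single horizontal reach of $a$ on each side), which is why the tall case $b \ge 2a+1$ must be treated separately. The remaining work is the elementary arithmetic of placing $\rho_j$ and $r$ within the stated bounds, plus the degenerate case $k=1$ (equivalently $b \ge a$), which the same inequalities already subsume.
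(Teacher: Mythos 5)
Your proof is correct and follows essentially the same route as the paper's: the upper bound comes from a single horizontally centered column of copies with anchors spaced $2b+1$ rows apart, and the lower bound from the fact that the anchor rows must stab a family of disjoint windows of $2b+1$ rows, giving $\left\lceil \frac{2a+1}{2b+1} \right\rceil$ in both directions. Your explicit incidence lemma and the condition $\lceil b/2\rceil \le a$ simply make rigorous the maximality claim (and the role of $b\le 2a$) that the paper's construction treats informally.
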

        
            \begin{proof}
                Place $T$ polyominoes, $T_{a,b}$, with anchors $C_{\left\lfloor \frac{b}{2}\right\rfloor+a+1, y}$, where $y = b+1 \mod{2b+1}$. In addition, if $n \mod {2b+1} \geq b+1$, add another $T$ polyomino with center $C_{\left\lfloor \frac{b}{2}\right\rfloor+a+1, n-(b+1)}$. 
                Note that this will put $T$ polyominoes, centered horizontally, so that the centers have exactly $2b$ rows between them, with the exception being at the bottom of the board where the centers may have less than $2b$ rows between them.  If there are at least $2b+1$ rows below the bottom most anchor, then we include another $T$ polyomino with anchor $C_{\left\lfloor \frac{b}{2}\right\rfloor+a+1, n-(b+1)}$. See Figure 
                \ref{fig:Tfixedbleq2a}.  
                
                \begin{figure}[h!]
                    \centering
                    \includegraphics[width = .3\linewidth]{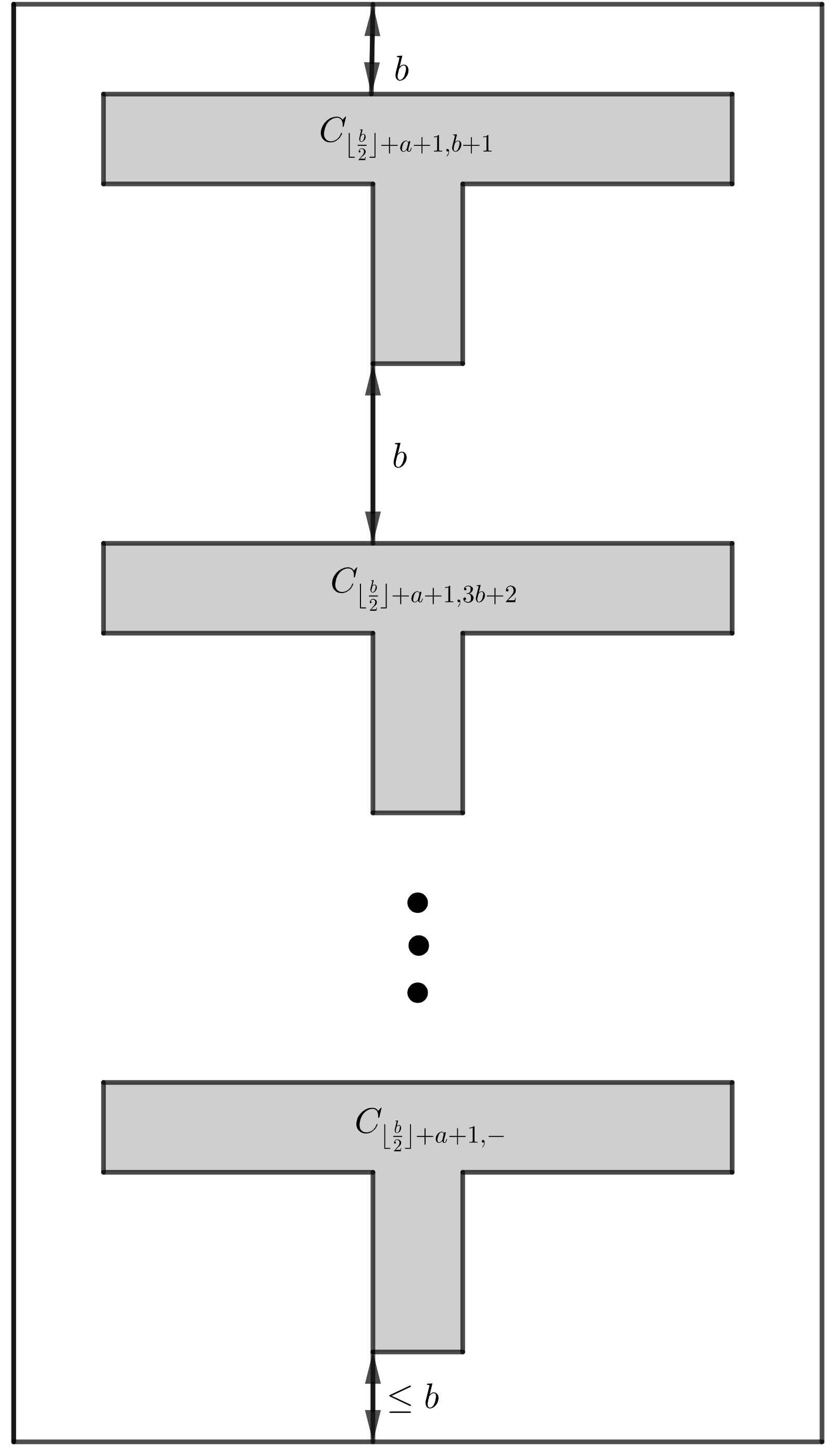}
                    \caption{Pattern for $\CPFI{T_{a,b}}$ when $b \leq 2a$. }
                    \label{fig:Tfixedbleq2a}
                \end{figure}
                
                Notice that there is no anchor in the top $b$ rows, and then every $2b+1$ rows thereafter will contain an anchor. If there are at least $2b+1$ empty rows at the bottom of the board, we include another $T$ polyomino. Therefore, the number of polyominoes that we now have in our arrangement is $\left\lceil \frac{n-b}{2b+1}\right\rceil = \left\lceil \frac{2a+1}{2b+1}\right\rceil$. This shows $$\CPFI{T_{a,b}} \leq \left\lceil \frac{2a+1}{2b+1} \right\rceil.$$
                
                To show this is optimal, notice that we must have an anchor in the top $b+1$ rows, Otherwise we can include another $T$ polyomino with an anchor in the top row. Next, notice that for every $2b+1$ rows thereafter, we must have an anchor.  If not, we will have at least $b+1$ consecutive empty rows. Therefore, the number of $T$ polyominoes must be at least 
                \begin{align*}
                    \CPFI{T_{a,b}} & \geq \left\lfloor \frac{n-(b+1)}{2b+1}\right\rfloor + 1\\
                        & = \left\lceil \frac{n-b}{2b+1}\right\rceil.\\
                \end{align*}
                The last equality follows since if $x$ and $y$ are positive integers then $\left\lfloor \frac{x}{y}\right\rfloor = \left\lceil\frac{x-y+1}{y} \right\rceil$.  
            \end{proof}
        
            \begin{theorem}\label{thm:fixed T polyomino b > 2a} 
                For a \textit{fixed $T$ polyomino}, $T_{a,b}$, if $b > 2a,$ then $$\CPFI{T_{a,b}}=\left\lceil \frac{b+1}{2a+1} \right\rceil.$$
            \end{theorem}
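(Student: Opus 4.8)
The plan is to adapt the strategy of Theorem~\ref{thm:fixed T polyomino b leq 2a} with the roles of rows and columns interchanged, since the hypothesis $b>2a$ makes $T_{a,b}$ tall and narrow. Throughout write $n=2a+b+1$. The first observation is that any copy of $T_{a,b}$ fitting on $\BoardB$ (whether placed or a candidate) has its anchor at a cell $C_{x,y}$ with $a+1\le x\le n-a$ and $1\le y\le n-b=2a+1$; thus every anchor occupies one of only $2a+1$ distinct rows. The key technical ingredient I would prove first is an exact overlap criterion: two legally placed copies anchored at $C_{x,y}$ and $C_{x',y'}$ intersect if and only if either $|x-x'|\le a$, or $a<|x-x'|\le 2a$ and $y=y'$. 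I would establish this by a short case split on the horizontal offset $|x-x'|$, the point being that because $b>2a$ we have $|y-y'|\le 2a<b$, so two vertical stems (each filling $b+1$ consecutive cells of its anchor column) are forced to meet whenever the anchor columns coincide or differ by at most $a$. An immediate consequence is that two placed copies can never share an anchor column.

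For the lower bound, let $\PolySet=\{P_1,\dots,P_k\}$ be a clumsy fixed packing with $P_m$ anchored at $C_{x_m,y_m}$; the $x_m$ are then pairwise distinct. I would show $\{a+1,\dots,n-a\}\subseteq\bigcup_{m}\{x_m-a,\dots,x_m+a\}$. If some legal column $x^\ast$ were missing, then for each row $y\in\{1,\dots,2a+1\}$ the candidate anchored at $C_{x^\ast,y}$ must, by maximality, meet some $P_m$; since $|x^\ast-x_m|>a$ for every $m$, the overlap criterion forces $a<|x^\ast-x_m|\le 2a$ and $y_m=y$. Letting $y$ run over all $2a+1$ values yields $2a+1$ distinct indices $m$ (distinct since their $y_m$ are) whose anchor columns all lie in the $2a$-element set $\{x^\ast-2a,\dots,x^\ast-a-1\}\cup\{x^\ast+a+1,\dots,x^\ast+2a\}$ — impossible, as those columns are distinct. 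Hence $k$ intervals of length $2a+1$ cover the $b+1$ columns $\{a+1,\dots,n-a\}$, so $k(2a+1)\ge b+1$, i.e.\ $k\ge\lceil (b+1)/(2a+1)\rceil$.

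For the upper bound I would construct a maximal packing of exactly $k=\lceil (b+1)/(2a+1)\rceil$ copies and verify both properties via the overlap criterion. Set $s=(k-1)(2a+1)$, so $s\le b$ because $k-1<(b+1)/(2a+1)$, and $c=\max\{a+1,\ b+1-s\}$; one checks $a+1\le c\le\min\{2a+1,\ n-a-s\}$. Place the copies anchored at $C_{c+(j-1)(2a+1),\,1}$ for $j=1,\dots,k$: all bars lie in row $1$ forming consecutive $(2a+1)$-blocks of columns and the stems occupy distinct columns, so the arrangement is valid and fits on the board. Their bar-intervals $\{x_m-a,\dots,x_m+a\}$ are adjacent and union to the block $[c-a,\ c+s+a]$, which has length $k(2a+1)\ge b+1$ and, by the choice of $c$, contains $\{a+1,\dots,n-a\}$; hence every legal candidate anchor $C_{x,y}$ has $|x-x_m|\le a$ for some $m$ and so meets $P_m$. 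Thus the packing is maximal, and combining with the lower bound gives $\CPFI{T_{a,b}}=\lceil (b+1)/(2a+1)\rceil$.

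The main obstacle is pinning down the overlap criterion precisely — especially the dividing line between $|x-x'|\le a$ (overlap for every pair of rows) and $a<|x-x'|\le 2a$ (overlap only when the bars share a row) — because it is exactly the contrast between this width-$2a$ band and the $2a+1$ available rows that powers the pigeonhole step; a coarser criterion would only deliver the weaker bound $\lceil (b+1)/(4a+1)\rceil$. The remaining bookkeeping (confirming $c$ always lies in the claimed range, which reduces to $s\le b$ and $b+1\le k(2a+1)$) is routine.
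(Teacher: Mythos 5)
Your proof is correct, but it takes a genuinely different (and substantially more detailed) route than the paper, whose entire argument for this theorem is the single sentence that it is ``similar to the proof of Theorem~\ref{thm:fixed T polyomino b leq 2a}, but consider anchors in columns rather than rows.'' A literal row/column swap of that earlier proof does not reproduce the stated formula: in the vertical direction the anchor sits at the end of the stem, so a strip of $b+1$ anchor-free rows is automatically empty, whereas in the horizontal direction the anchor sits at the center of the bar with reach $a$ to each side, so forbidding $2a+1$ consecutive empty columns only forces an anchor in every $4a+1$ columns and yields the weaker bound $\left\lceil \frac{b+1}{4a+1}\right\rceil$ --- exactly the pitfall you flag. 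Your overlap criterion (copies anchored at $C_{x,y}$ and $C_{x',y'}$ meet iff $|x-x'|\le a$, or $a<|x-x'|\le 2a$ with $y=y'$, valid because legal anchor rows satisfy $|y-y'|\le 2a<b$), together with the pigeonhole comparing the $2a+1$ legal anchor rows against the at most $2a$ columns at distance between $a+1$ and $2a$ from an uncovered column, is what closes this gap and shows the bar-intervals $\{x_m-a,\dots,x_m+a\}$ must cover all $b+1$ legal anchor columns; I checked the criterion, the covering/pigeonhole step, and the explicit maximal construction (anchors in row $1$ at columns $c+(j-1)(2a+1)$ with $c=\max\{a+1,\,b+1-s\}$), and they are sound, including the bookkeeping $s\le b$ and $c\le\min\{2a+1,\,n-a-s\}$. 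So your write-up not only proves the theorem but supplies reasoning that the paper's terse appeal to symmetry leaves out; its cost is length, while the paper's version buys brevity at the price of hiding the asymmetry between the stem direction and the bar direction.
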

        
            \begin{proof}
                The proof here is similar to the proof of Theorem \ref{thm:fixed T polyomino b leq 2a} above, but consider anchors in columns rather than rows. 
            \end{proof}
        
        For free $T$ polyominoes, we can find an exact value when $a=b$ and bounds for $T_{a,b}$. 
            \begin{theorem}\label{thm:free T polyomino a=b}
                For a \textit{free $T$ polyomino}, $T_{a,b}$, if $a=b$, then $$\CPFR{T_{a,a}} = 2.$$
            \end{theorem}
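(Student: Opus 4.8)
The plan is to prove the two inequalities $\CPFR{T_{a,a}}\ge 2$ and $\CPFR{T_{a,a}}\le 2$ separately, working on the board $\BoardB$ of size $n\times n$ with $n=|T_{a,a}|=3a+1$.

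For the lower bound I would argue, in the spirit of Lemma \ref{lem:freeL2}, that a single $T$ polyomino can never be a free packing. Since $\BoardB$ is invariant under $90^\circ$ rotation, it suffices to treat an arrangement consisting of one copy of $T_{a,a}$ with anchor at a cell $C_{x,y}$; fitting on the board forces $a+1\le x\le 2a+1$ and $1\le y\le 2a+1$. This copy occupies row $y$ in columns $x-a,\dots,x+a$ and column $x$ in rows $y+1,\dots,y+a$, so rows $1,\dots,y-1$ and rows $y+a+1,\dots,3a+1$ are entirely empty. If $y\ge a+2$ a second $T_{a,a}$ fits in the empty strip of rows above; if $y\le a$ one fits in the empty strip below. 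The only remaining case is $y=a+1$, and there one checks that at least one of column $1$ (when $x\ge a+2$) or column $3a+1$ (when $x\le 2a$) is entirely empty, and that a suitably placed $TR^3_{a,a}$ (respectively $TR_{a,a}$) whose bar fills that column in rows $a+1,\dots,3a+1$ and whose stem lies in row $2a+1$ can be added without overlap. Hence $\CPFR{T_{a,a}}\ge 2$.

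For the upper bound I would exhibit an explicit pair. Take $T^1=TR_{a,a}$ with anchor at $C_{a+1,a+1}$ (vertical bar filling column $a+1$ in rows $1,\dots,2a+1$, stem in row $a+1$, columns $1,\dots,a$) and $T^2=TR^3_{a,a}+(2a,a)$, i.e. with anchor at $C_{2a+1,2a+1}$ (vertical bar filling column $2a+1$ in rows $a+1,\dots,3a+1$, stem in row $2a+1$, columns $2a+2,\dots,3a+1$). These are disjoint because $T^1$ meets only columns $1,\dots,a+1$ and $T^2$ meets only columns $2a+1,\dots,3a+1$, and both lie on $\BoardB$. The main structural remark is a ``central band'' fact: every window of $2a+1$ consecutive columns inside a row of width $3a+1$ contains all of the columns $a+1,\dots,2a+1$ (and symmetrically for rows). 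In this configuration column $a+1$ is occupied in every row of $\{1,\dots,2a+1\}$ and column $2a+1$ in every row of $\{a+1,\dots,3a+1\}$, so every row of $\BoardB$ has an occupied cell among columns $a+1,\dots,2a+1$; hence the bar of any horizontal‑bar copy ($T$ or $TR^2$), which must cover $2a+1$ consecutive columns of one row, would overlap $T^1$ or $T^2$. So no horizontal‑bar copy fits.

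The step I expect to be the main obstacle is ruling out the vertical‑bar copies ($TR$ and $TR^3$): unlike the $L$ case, two $T$'s are too few to also fill the central band of rows, so columns $a+2,\dots,2a$ are left completely empty, and the argument must use the stems. A vertical bar needs $2a+1$ consecutive empty cells in some column; in columns $1,\dots,a$ and $2a+2,\dots,3a+1$ the longest empty run is $2a$, and in columns $a+1$ and $2a+1$ it is $a$, so the bar must sit in one of the entirely empty columns $c$ with $a+2\le c\le 2a$. Then its stem is a horizontal run of $a$ cells in the bar's centre row $y+a\in\{a+1,\dots,2a+1\}$, lying either in columns $c-a,\dots,c-1$ (a range that contains column $a+1$, since $c-a\le a$ and $c-1\ge a+1$) or in columns $c+1,\dots,c+a$ (a range that contains column $2a+1$, since $c+1\le 2a+1\le c+a$). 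Because column $a+1$ is occupied throughout rows $1,\dots,2a+1$ and column $2a+1$ throughout rows $a+1,\dots,3a+1$, the stem cell in that column is occupied in either case, so no vertical‑bar copy fits. Therefore $\{T^1,T^2\}$ is a free packing, giving $\CPFR{T_{a,a}}\le 2$ and completing the proof.
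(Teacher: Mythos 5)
Your proof is correct, and it follows the same overall template as the paper's (rule out a one-copy packing, then exhibit an explicit two-copy packing and verify maximality), but the details differ in a way worth noting. For the upper bound the paper places an unrotated $T_{a,a}$ anchored at $C_{a+1,a+1}$ together with a $TR_{a,a}$ anchored at $C_{2a+2,2a+1}$; these two copies touch, so the uncovered cells split into three disjoint regions each too small for another copy, and maximality follows directly from the region sizes. Your configuration of two vertical-bar copies anchored at $C_{a+1,a+1}$ and $C_{2a+1,2a+1}$ leaves columns $a+2,\dots,2a$ entirely empty, so no such region decomposition is available; instead you block horizontal-bar copies via the observation that columns $a+1$ and $2a+1$ together meet every row (combined with the window fact for width $3a+1$), and you block vertical-bar copies by showing that a bar in an empty central column forces its stem to hit column $a+1$ or $2a+1$ in an occupied row. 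That stem-collision argument is the extra work your configuration demands, and it checks out (including the degenerate case $a=1$, where no central empty column exists). Your lower bound is the same idea as the paper's but more fully cased out: the paper assumes an unrotated copy with column $n$ empty and inserts a $TR$ anchored at $C_{n,2a+1}$, while you split on the anchor row ($y\le a$, $y\ge a+2$, $y=a+1$) and, in the last case, insert a vertical-bar copy along an empty boundary column; both are sound, yours making explicit the verification the paper leaves terse.
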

        
            \begin{proof}
                    Assume $a=b$.  Consider placing a $T$ polyomino with anchor $C_{a+1, a+1}$ and another $TR$ polyomino with anchor $C_{2a+2, 2a+1}$. See Figure \ref{fig:TFreea=b}

                    Notice that this creates three pairwise disjoint regions which can not contain another $T$ polyomino.  Therefore $\CPFR{T_{a,a}} \leq 2$. 
                
                    Assume by way of contradiction that $\CPFR{T_{a,a}} =1$. Assume, without loss of generality, a single $T$ polyomino, unrotated, is placed on the board so that column $n$ is empty.  Notice that the anchor must be in the first $a+1$ rows and the first $2a$ columns. We can always place another $TR$ polyomino with anchor $C_{n, 2a+1}$ since column $n$ was empty and there are at least $a$ columns between the anchors. Note that column 1 or column $n$ must be empty and if column 1 is empty, the argument is similar. Therefore $\CPFR{T_{a,a}} \geq 2$ which completes the proof. 
                    \begin{figure}[h!]
                        \centering
                        \includegraphics[width = .4\linewidth]{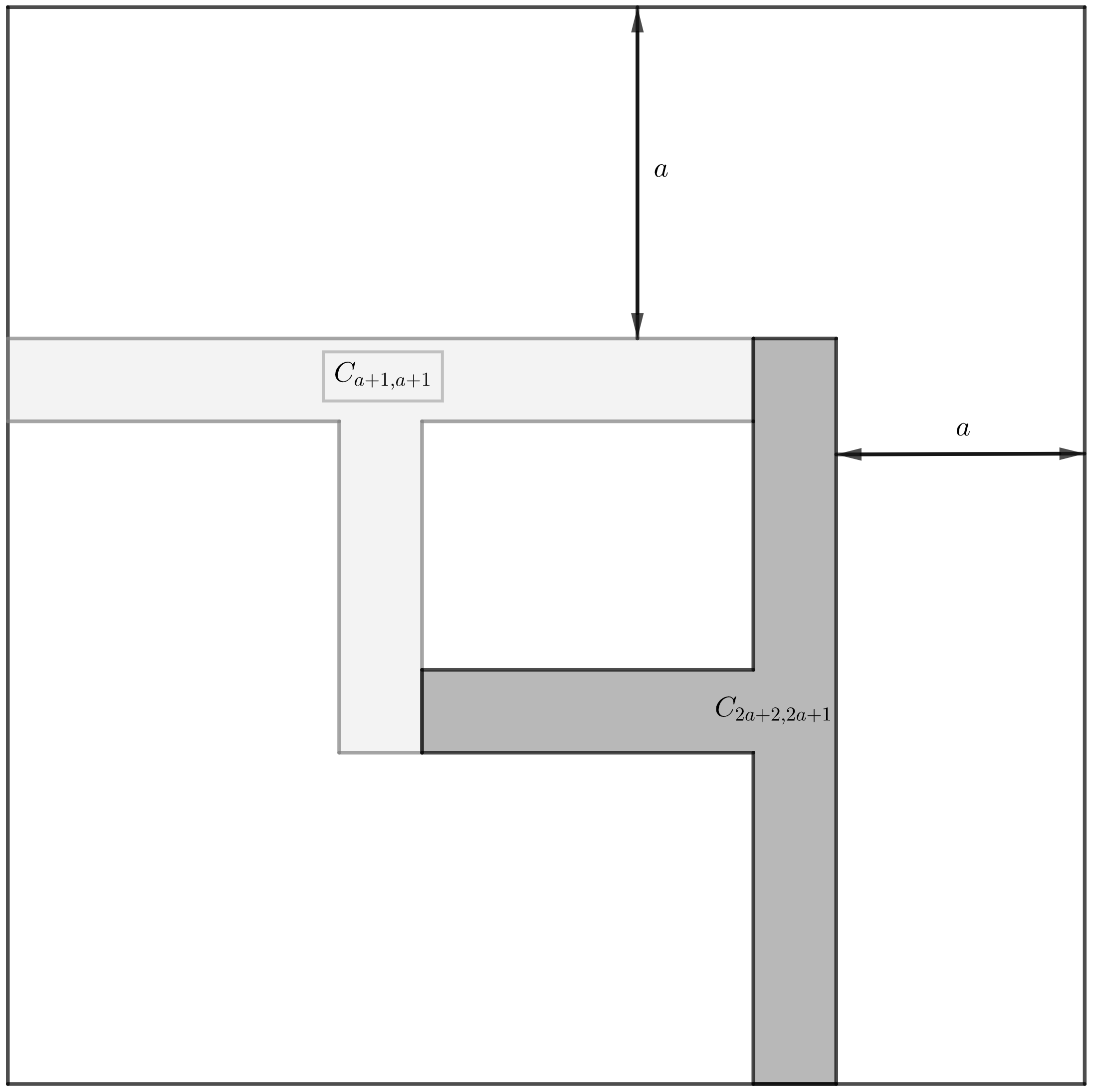}
                        \caption{Shows $\CPFR{T_{a,a}}=2$.  }
                        \label{fig:TFreea=b}
                    \end{figure}
            \end{proof}
                
            \begin{theorem}\label{thm:free T polyomino bounds}
                For a \textit{free $T$ polyomino}, $T_{a,b}$, $2 \leq \CPFR{T_{a,b}} \leq 4$.
            \end{theorem}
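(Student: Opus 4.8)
The lower bound $\CPFR{T_{a,b}}\ge 2$ extends the argument embedded in the proof of Theorem~\ref{thm:free T polyomino a=b}: one shows a single $T$-polyomino is never a packing. Suppose the board carries exactly one copy; by symmetry take it unrotated with anchor $C_{x,y}$, so it occupies the $2a+1$ cells of row $y$ from column $x-a$ to $x+a$, together with the $b$ cells of column $x$ in rows $y+1,\dots,y+b$. Since $a,b\ge 1$ we have $2a+1<n$ and $b+1<n$, so the bounding box of this copy misses at least one full column (either $X_1$ or $X_n$) and at least one full row (either $Y_1$ or $Y_n$). Using an empty column, say $X_n$, we insert a copy of $TR_{a,b}$ whose vertical bar fills column $n$ and whose stem points left, and we choose the vertical offset of this second piece so that its length-$b$ stem misses the single occupied row $y$ of the first piece; if the admissible offsets leave no such choice (which can only happen when $y$ is forced to meet the stem's row for every placement), we instead work along the empty row, inserting $TR^3$ or $TR$ there. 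In every case we get a valid arrangement of two copies, so $\CPFR{T_{a,b}}\ge 2$.

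For the upper bound we exhibit one valid arrangement of (at most) four copies which is maximal. The construction is a pinwheel: place $T_{a,b}$, $TR_{a,b}$, $TR^2_{a,b}$, and $TR^3_{a,b}$, one in each rotational orientation, arranged with $90^\circ$ rotational symmetry so that each bar runs along one side of the board and the four stems reach inward toward the center. Keeping the occupied cells straight via $n=2a+b+1$, one checks the four pieces are pairwise disjoint and that the complement breaks into a small central region together with four congruent residual regions near the corners. The numerical heart of the verification is that a copy of $T_{a,b}$ in a horizontal orientation needs a clear $(2a+1)\times(b+1)$ block, while in a vertical orientation it needs a $(b+1)\times(2a+1)$ block; one shows each residual region has a linear dimension strictly smaller than $\min(2a+1,\,b+1)$, which kills every orientation of a would-be fifth piece. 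Hence $\CPFR{T_{a,b}}\le 4$, and together with the first part, $2\le\CPFR{T_{a,b}}\le 4$.

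The routine part is the lower bound; the real work is the upper-bound construction, and that is also the main obstacle. One must pin down placements of the four pieces that work uniformly in $a$ and $b$ — the regimes $b\le 2a$ and $b>2a$ behave differently, since in the first the bars are the long dimension and in the second the stems are, so the pinwheel may need to be described slightly differently in the two cases — and then argue rigorously that no fifth $T$ fits. The latter is delicate because the empty part of the board is a union of complements of crosses rather than a rectangle, so "too small to admit a $T$" cannot be settled by a single area or width estimate; it must be checked against all four orientations and all anchor positions of the candidate fifth piece. A figure makes this last verification transparent.
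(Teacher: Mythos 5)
The upper bound is where your proposal has a real gap, and it is precisely the part you defer as ``the real work.'' The pinwheel must be pinned down exactly, because the naive reading of your description fails: if the four bars literally run along the sides of the board (bar of the unrotated copy in row $1$, then rotate by $90^\circ$ about the center, stems pointing inward), the arrangement need not be maximal. Take $a=5$, $b=2$, $n=13$: the copies are the bar in row $1$, columns $1$--$11$, with stem $C_{6,2},C_{6,3}$, and its three rotations; then row $4$ is empty in columns $2$--$12$, so a fifth $T_{5,2}$ with anchor $C_{7,4}$ (bar in row $4$, stem $C_{7,5},C_{7,6}$) fits. The paper's construction insets the bars to depth $c=\min\{a,b\}$, with anchors $C_{a+1,c}$, $C_{n-c+1,a+1}$, $C_{n-a,n-c+1}$, $C_{c,n-a}$; this is exactly what shrinks the central hole (side $b-1$ when $a<b$, side at most $2a$ when $b\le a$) and keeps the outer strips thin, and the two regimes $a<b$ and $b\le a$ are then checked separately. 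Moreover, the one quantitative test you propose --- each residual region has a linear dimension strictly smaller than $\min(2a+1,\,b+1)$ --- is not the right criterion and is false even for the correct construction: for $a<b$ the central hole is a $(b-1)\times(b-1)$ square and $b-1$ need not be below $2a+1$ (e.g.\ $a=1$, $b=10$); what excludes a fifth copy there is that its bounding box requires $2a+1$ in one direction \emph{and} $b+1$ in the other, so the test must be that no empty region contains a $(2a+1)\times(b+1)$ or $(b+1)\times(2a+1)$ block. As written, your upper bound is a plan rather than a proof, and the plan needs the inset and the corrected criterion to go through.

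The lower bound is the same idea as the paper's, but your case split is not closed. A single unrotated copy occupies $b+1$ rows, not ``the single occupied row $y$'': its vertical stem can block the horizontal stem of the piece you slide in along the empty column, and indeed when $y=a+1$ and $x\ge 2a+1$ every admissible offset along column $n$ is blocked (already for $a=1$, $b=2$, anchor $C_{3,2}$). Your fallback ``work along the empty row, inserting $TR^3$ or $TR$ there'' names orientations whose bars are vertical, so they do not lie along a row, and you never verify that some row insertion actually succeeds in the blocked case. The paper avoids this by a clean dichotomy: with $X_n$ empty, either $Y_n$ is empty and $TR^2$ anchored at $C_{x+1,n}$ works, or the copy meets $Y_n$, forcing $y=2a+1$, and then $TR$ anchored at $C_{n,a+1}$ works; disjointness is immediate in both cases. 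Your branch can be repaired (for instance, a $TR^2$ with bar in row $n$ and stem in a column $s\neq x$ succeeds whenever the column insertion is blocked), but in your write-up it is asserted, not proved.
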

        
            \begin{proof}
                Assume by way of contradiction that $\CPFR{T_{a,b}} =1.$ Since $X_n$ or $X_1$ must be empty, we will assume without loss of generality a single $T$ polyomino, unrotated, is placed on the board with anchor $C_{x,y}$, so that $X_n$ is empty. 
                
                If $Y_n$ is empty, then we can place a $TR^2$ polyomino on the board with anchor $C_{x+1, n}$.  Therefore, the $T$ polyomino must intersect $Y_n$ so therefore the anchor must be on row $y=2a+1$. Since $X_n$ is empty, we can place a $TR$ polyomino with anchor $C_{n, a+1}$. This implies $\CPFR{T_{a,b}} \geq 2.$
                
                Let $c=\min\{a,b\}$. Consider placing the following four $T$ polyominoes on the board: $T_{a,b}$ with anchor $C_{a+1, c}$, $TR_{a,b}$ with anchor $C_{n-c+1, a+1}$, $TR^2_{a,b}$ with anchor $C_{n-a, n-c+1}$, and $TR^3_{a,b}$ with anchor $C_{c, n-a}$.
                   
                Case 1:  Assume $a <b$.
                Note that $T$ and $TR$ will always be adjacent.  This is because $T$ would occupy cells $\{C_{x,y}: 1 \leq x \leq 2a+1, y = a\}$ and $TR$ would occupy cells $\{C_{x,y}: a+2 \leq x \leq n-a, y=a+1\}$.  This implies that the four polyominoes will create five disjoint empty regions, four along the outside of the board and one on the interior of the board.  The empty interior region will be a square with side length $b-1$ and therefore we can not fit another $T$ polyomino in the region. Each empty region on the outside of the board will be subsets of a $a\times n$ rectangle and therefore can not fit another $T$ polyomino.  Thus, we have a clumsy packing with four $T$ polyominoes. See Figure \ref{fig:TFreea<b}.
                
                \begin{figure}[h!]
                    \centering
                    \includegraphics[width = .4\linewidth]{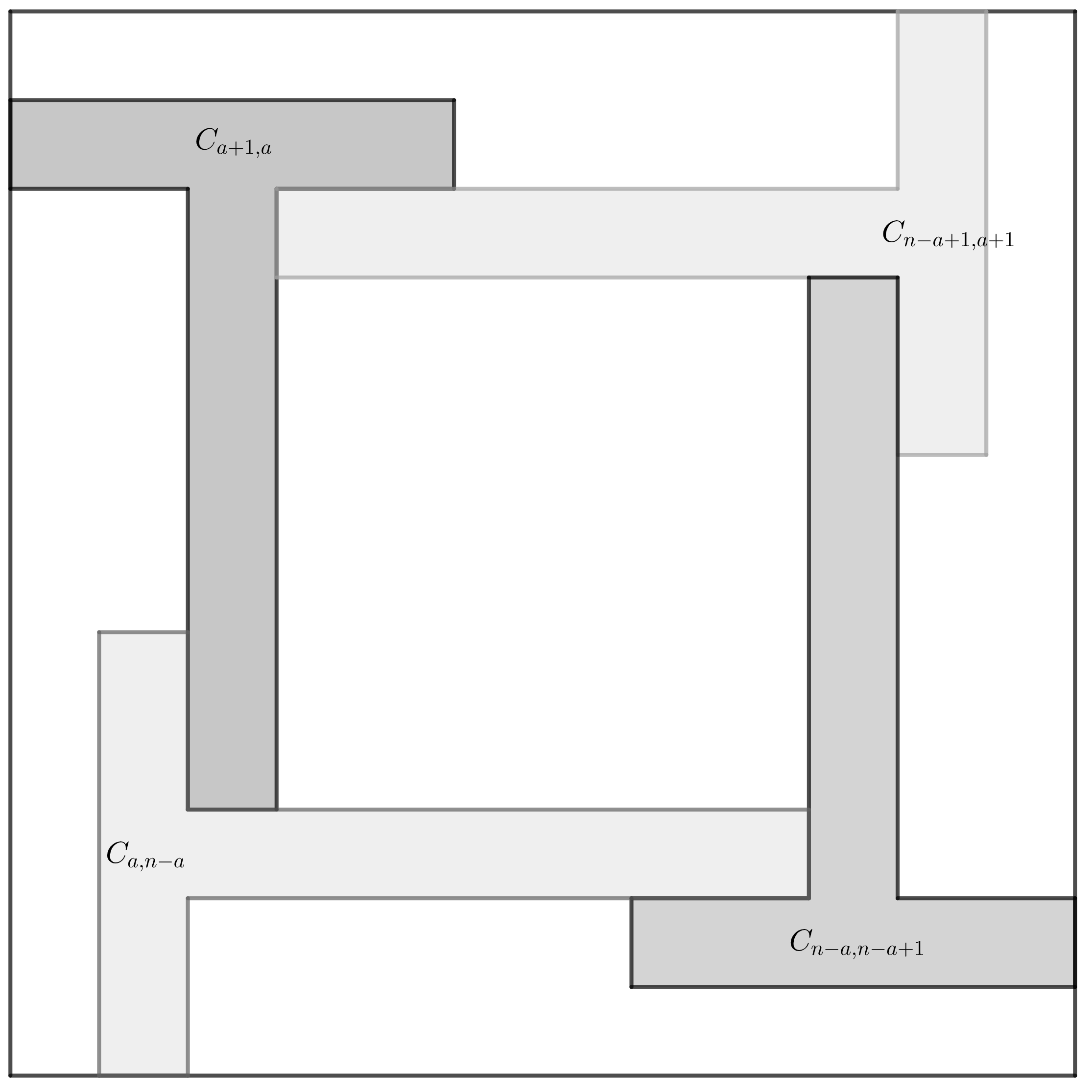}
                    \caption{Pattern for $\CPFR{T_{a,b}}$ when $a<b$.}
                    \label{fig:TFreea<b}
                \end{figure}
                
                Case 2:  Assume $b \leq a$. 
                Note that $T$ and $TR$ will always be adjacent.  This is because $T$ would occupy cells $\{C_{x,y}: 1 \leq x \leq 2a+1, y=b\}$ and $TR$ would occupy cells $\{C_{x,y}: x=n-c+1=2a+2, 1 \leq y \leq 2a+1\}$This implies that the four poloyominoes will create five disjoint empty regions, four along the outside of the board and one on the interior of the board.  The empty interior region will be a subset of a square with side length $n-2b = 2a+1-b \leq 2a$ and therefore we can not fit another $T$ polyomino in the region.  Each empty region on the outside of the board will be subsets of a $(b-1) \times 2a+1$ rectangle and therefore can not fit another $T$ polyomino.  Thus, we have a clumsy packing with four $T$ polyominoes. See Figure \ref{fig:TFreebleqa}.
                
                \begin{figure}[h!]
                    \centering
                    \includegraphics[width = .4\linewidth]{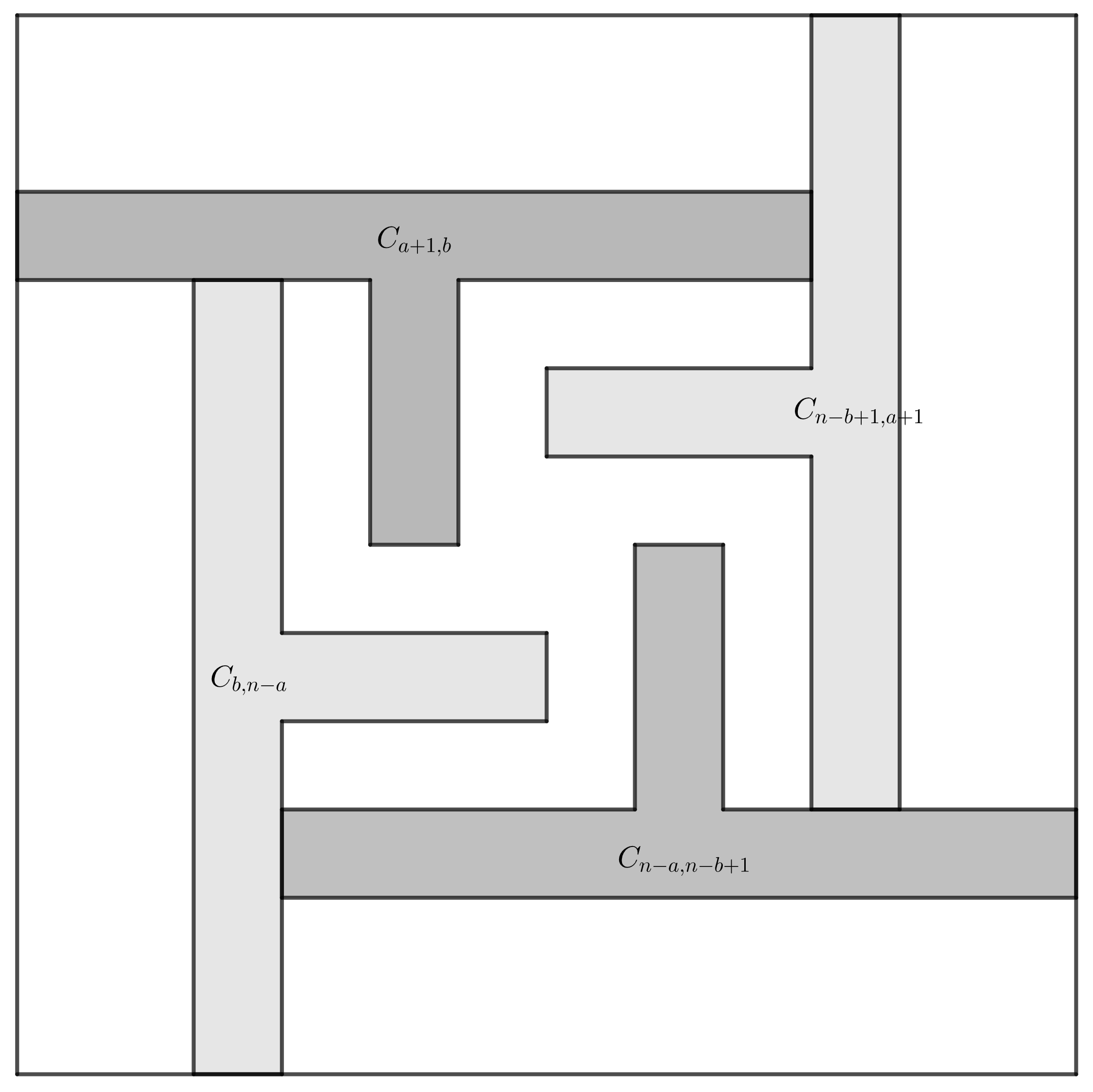}
                    \caption{Pattern for $\CPFR{T_{a,b}}$ when $b \leq a$. }
                    \label{fig:TFreebleqa}
                \end{figure}
            \end{proof}
            
            Note that these are not always optimal packing. For example, Theorems \ref{thm:free T polyomino a=b} shows that $\CPFR{T_{a,a}} = 2$.  Also, consider $a =4$ and $b = 3$. This can be packed using only three $T_{4,3}$ polyominoes, specifically two $T_{4,3}$ polyomino with centers $C_{5, 4}$ and $C_{5, 9}$ as well as $TR_{4,3}$ with center $C_{10, 8}$.  See the Figure \ref{fig:T43Optimal} for more details. 
            
            \begin{figure}[h!]
                \centering
                \includegraphics[width = .4\linewidth]{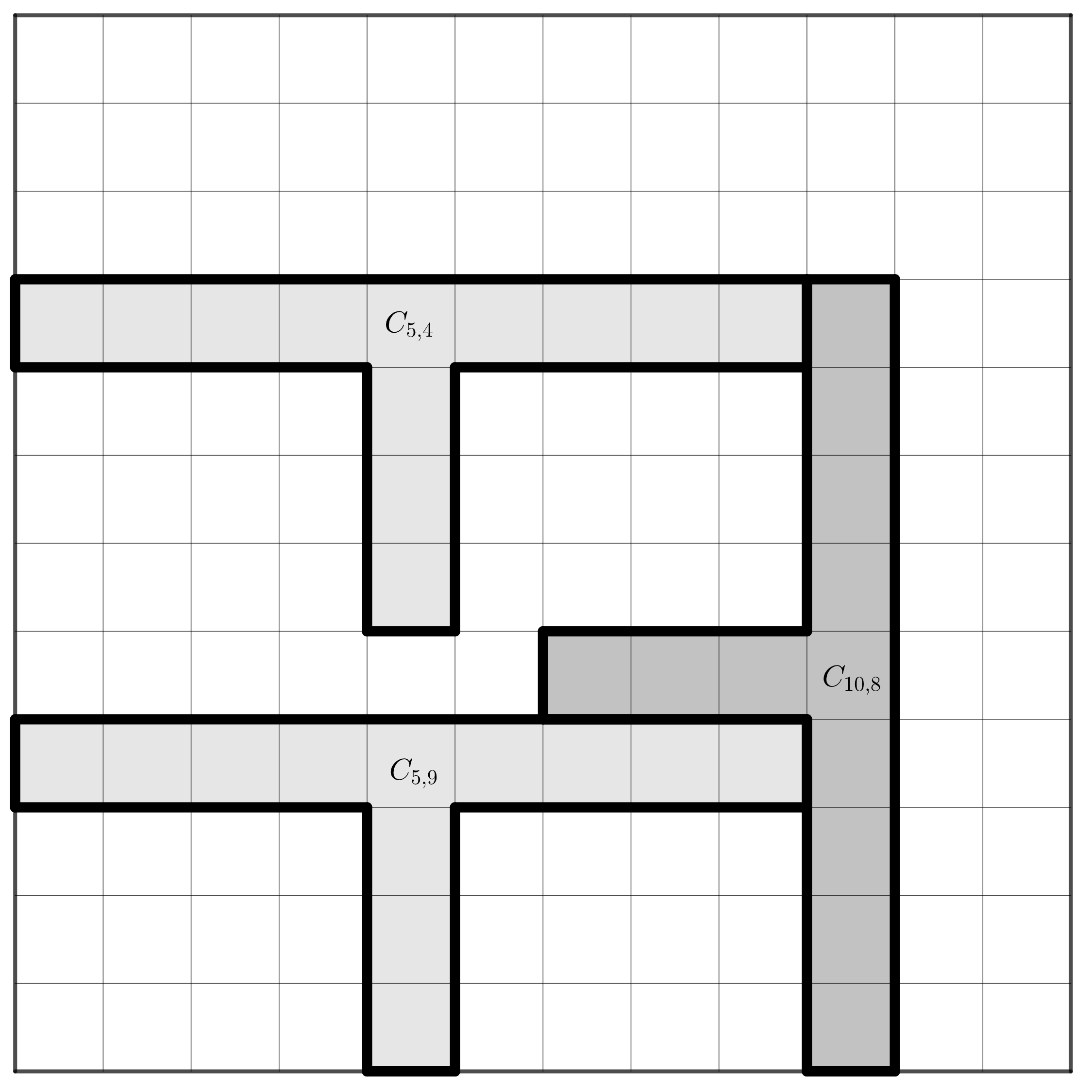}
                \caption{Shows $\CPFR{T_{4,3}} \leq 3$. }
                \label{fig:T43Optimal}
            \end{figure}
    
    \subsection{Plus Polyominoes}
        
        Our last polyomino is a \textit{plus} polyomino.  
    
            \begin{definition}\label{def:plus polyomino}
                A \textit{$plus$ polyomino}, $P_a$, is a polyomino such that $P_a = \{C_{i,a+1}: 1 \leq i \leq 2a+1\} \cup \{C_{a+1,j}: 1 \leq j \leq 2a+1 \}$. The $|P_a| = 4a + 1$. We will define the anchor of the polyomino to be the cell $C_{a+1, a+1}$. 
            \end{definition}
            
            A \textit{$plus$ polyomino} when rotated by an integer multiple of $90\deg$ remains the same polyomino. Thus a \textit{free,} and \textit{fixed,} polyominoes are indistinguishable. Therefore we will use $\CP{P_a}$ to denote the clumsy packing number since $\CPFI{P_a} = \CPFR{P_a}$

        
            \begin{theorem}\label{thm:cp of plus polyomino is 1} 
                For a $plus$ polyomino, $P_{a}$, $cp(P_a) = 1.$
            \end{theorem}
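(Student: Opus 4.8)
The plan is to produce an explicit clumsy packing consisting of a single plus placed at the center of the board, and then verify it cannot be extended. First I would observe that, since $n=|P_a|=4a+1$, the board $\BoardB$ has a unique central cell $C_{2a+1,2a+1}$. Placing $P_a$ with its anchor on this cell yields the polyomino $\{C_{i,2a+1}: a+1\le i\le 3a+1\}\cup\{C_{2a+1,j}: a+1\le j\le 3a+1\}$; every index here lies between $1$ and $4a+1$, so this is a valid arrangement, and $\CP{P_a}$ is at least $1$ since the empty arrangement is never maximal (a plus fits on any $(4a+1)\times(4a+1)$ board).

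Next I would describe all possible placements of a competing plus. Because a plus polyomino is invariant under rotation, any $\PolyPStar\FreeEquiv P_a$ is merely a shift of $P_a$; write its anchor as $C_{x,y}$. For $\PolyPStar$ to lie on the board, each of its four arms of length $a$ must stay inside $\BoardB$, which forces $a+1\le x\le 3a+1$ and $a+1\le y\le 3a+1$. So every legal anchor for a second plus lies in the central $(2a+1)\times(2a+1)$ block of cells.

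The key step is then a one-cell collision argument: for any such anchor $C_{x,y}$, the cell $C_{x,2a+1}$ belongs to the vertical arm of $\PolyPStar$, because $y-a\le 2a+1\le y+a$ is exactly the condition $a+1\le y\le 3a+1$; and the same cell $C_{x,2a+1}$ belongs to the horizontal arm of the centrally placed plus, because $a+1\le x\le 3a+1$. Hence the centrally placed plus and $\PolyPStar$ share the cell $C_{x,2a+1}$, so their union is an invalid arrangement. Therefore the single centrally placed plus is already a clumsy packing (free and fixed coincide here), giving $\CP{P_a}\le 1$, and combined with $\CP{P_a}\ge 1$ we conclude $\CP{P_a}=1$.

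I do not expect a genuine obstacle here: the only thing requiring care is the bookkeeping that pins down the central $(2a+1)\times(2a+1)$ window of admissible anchors for a second plus, and then the choice of the common cell $C_{x,2a+1}$ — intersecting the new plus's vertical bar with the placed plus's horizontal bar is what makes the argument uniform in $x$ and $y$. (For the degenerate case $a=0$, the plus is a single cell on a $1\times 1$ board and the statement is immediate.)
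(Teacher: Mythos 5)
Your proposal is correct and matches the paper's approach: both place a single plus with its anchor at the central cell $C_{2a+1,2a+1}$ and observe that any second plus must have its anchor in the central $(2a+1)\times(2a+1)$ window, after which it necessarily collides with the placed plus. The only cosmetic difference is that you exhibit the explicit shared cell $C_{x,2a+1}$, whereas the paper phrases the same obstruction as the lack of $2a+1$ consecutive empty cells in any admissible column.
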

            
            \begin{proof}
                Let $P_a$ denote a $plus$ polyomino. Note that no polyomino can have an anchor in the leftmost $a$ columns, rightmost $a$ columns, top $a$ rows, or the bottom $a$ rows. Place the polyomino $P_a$ on the board with anchor at cell $C_{2a+1, 2a+1}$ and call this polyomino $P$. To place another $plus$ polyomino anchor at the board we must have at least $2a+1$ consecutive empty cells in a single column. This only happens in the leftmost $a$ columns and the rightmost $a$ columns which can not contain an anchor.  Therefore $\CP{P_a} = 1$. 
               
            \end{proof}
        
            
        
\section{Open Conjectures and Questions}\label{sect:questions}
    The area of packing polyominoes still has a lot of work. Although infinite spaces have been considered, there are a lot of applications for finite packing which makes this a worthwhile problem to pursue.  In the future, in addition to looking into different types of polyominoes and different sized finite boards, we propose some of the following questions and conjectures. Some of these may be much easier than others. 
    
    \subsection{Rectangular Polyominoes}
    
        \begin{question}\label{ques:free rectangular polyominoes}
            For a free rectangular polyomino, $R_{a,b}$, when $a \ne b$ and $a$ or $b \ne 1$, what is the $\CPFR{R_{a,b}}$? This is particularly interesting considering that a horizontal tiling of $R_{3, 6}$ shows $\CPFR{R_{3,6}} \leq \CPFI{R_{3, 6}} \leq 8$ and we think this is an equality. However, a similar horizontal tiling would imply $\CPFR{R_{3,9}} \leq \CPFI{R_{3, 9}} \leq 10$ but we have constructed a different tiling so that $\CPFR{R_{3,9}} \leq 9$ which keeps $7\times8$ rectangular regions empty in the four corners.  
        \end{question}
    
        
    
    \subsection{$L$ Polyominoes}
    
        \begin{conjecture}\label{conj:fixed L polyominoes} 
                For a fixed $L$ polyomino, $L_{a,b}$, if $b<a$, then $\CPFI{L_{a,b}} \leq \left\lfloor \frac{n}{(a+2)^2 + a} \right\rfloor (a+1) + \left \lceil \frac{n- \left \lceil \frac{n}{(a+1)^2 + a} \right \rceil ((a+1)^2) - a}{a+1} \right\rceil$.
        \end{conjecture}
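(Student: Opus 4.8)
The plan is to prove this as a constructive upper bound: for each pair $a>b\ge 1$ I would exhibit an explicit valid arrangement $\PolySet$ of fixed copies of $L_{a,b}$ — that is, shifts of $L_{a,b}$ with no rotations allowed — which is maximal, so that $\CPFI{L_{a,b}}\le\FixedPackingNumber{\PolySet}$, and then show $\FixedPackingNumber{\PolySet}$ equals the displayed quantity. The arrangement should reduce to the single-piece pattern of Theorem~\ref{thm:fixed L polyominoes a=b} when $a$ and $b$ are close, and to the small explicit patterns of the free case on small boards; in general it should consist of a periodically repeated ``bulk'' block, counted by the first summand $\lfloor n/((a+2)^2+a)\rfloor(a+1)$, sitting on top of one truncated ``edge'' block, counted by the nested ceiling in the second summand (on the board $n=|L_{a,b}|$ with $b<a$ the bulk block need not recur, so there the bound is carried by the edge block, but stating the construction for a general ambient square makes the structure of both terms transparent).

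The first ingredient I would record is an insertion criterion: a copy of $L_{a,b}$ with anchor $C_{x,y}$ occupies the horizontal run $C_{x,y},\dots,C_{x+a,y}$ and the vertical run $C_{x,y},\dots,C_{x,y+b}$, so an arrangement fails to be a fixed packing exactly when there is a cell $C_{x,y}$ for which those $a+1$ horizontal cells and those $b+1$ vertical cells are all empty. Hence any empty region of width at most $a$, or of height at most $b$, or any staircase region in which every empty horizontal run of length $\ge a+1$ lies in a row whose left end has fewer than $b+1$ empty cells below it, admits no further copy. Every maximality check below reduces to this observation, so I would isolate it as a lemma first.

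For the core construction the key point when $b<a$ is that on a narrow board a single well-placed $L$ already kills an entire row of horizontal-leg placements: since $a+1$ is comparable to the board width, the horizontal leg $C_{x,y},\dots,C_{x+a,y}$ meets every length-$(a+1)$ window in row $y$. I would therefore place $L$'s hugging the left edge, at anchors $C_{1,y}$ for an arithmetic progression of rows $y$ spaced $b+1$ apart, arranged so that (i) consecutive copies are disjoint (spacing $\ge b+1$ keeps their column-$1$ legs apart), (ii) column $1$ is filled throughout the relevant range so no new anchor can sit there, and (iii) every window of $b+1$ consecutive rows contains a placed copy whose horizontal leg blocks any threatening vertical leg. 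For a larger ambient board one chains such progressions into the repeating block of side $(a+2)^2+a$ — the ``$+a$'' being the vertical slack that forbids inserting an $L_{a,b}$ across the seam between consecutive blocks — while the truncated block at the bottom/right margin is handled by the same device, with the board's own edge completing the blocking. Converting ``number of progression rows needed before the edge takes over'' into the stated ceiling, using the identity $\lceil x/y\rceil=\lfloor(x-1)/y\rfloor+1$ already exploited in Theorem~\ref{thm:fixed rectangular where a,b geq 2}, finishes the count; I would then verify the three validity requirements — pairwise disjointness and containment in the board, internal blocking of each block, and the absence of any insertion across a seam or against the board boundary.

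I expect the main obstacle to be exactly this boundary bookkeeping. Making the progression terminate correctly against the bottom edge (so the ``missing'' pieces there are genuinely blocked by the board rather than leaving an $(a+1)\times(b+1)$-friendly empty corner), choosing the block side and offsets so nothing slips across a seam, and — hardest of all — arranging that the two counts collapse to the exact closed form in the statement rather than to an expression of merely the same order, is where the real work sits; the constants $(a+2)^2+a$ and $(a+1)^2+a$ are presumably the fingerprint of this fine-tuning, so the first sanity test would be to check small cases (for instance that two pieces suffice on the $L_{3,1}$ board, and the $b=1$ family generally) against the formula, and to reconcile the edge term on the canonical $n=|L_{a,b}|$ board, before committing to the general statement.
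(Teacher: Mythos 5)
The statement you are trying to prove is not proved in the paper at all: it appears in Section \ref{sect:questions} as Conjecture \ref{conj:fixed L polyominoes}, so there is no proof of the paper's to compare against, and anything you write must stand entirely on its own. As it stands, your text is a plan rather than a proof. You never actually specify the arrangement — the anchors are described only qualitatively (``an arithmetic progression of rows spaced $b+1$ apart'' inside a repeating block of side $(a+2)^2+a$), you never carry out the maximality verification beyond gesturing at an insertion criterion whose ``staircase region'' clause is itself imprecise, and, as you concede in your final paragraph, you do not derive the exact closed form: you only hope that the constants $(a+2)^2+a$ and $(a+1)^2+a$ will emerge from ``fine-tuning.'' That derivation is precisely the content of the conjecture, so the essential work is missing.

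Two concrete issues need to be settled before any such construction can even be stated. First, Definition \ref{def:L polyomino} defines $L_{a,b}$ only for $0<a\leq b$, so the hypothesis $b<a$ forces you to say explicitly which polyomino and which board you mean (presumably the long leg horizontal, on the $n\times n$ board with $n=a+b+1$). With that reading $n=a+b+1<(a+2)^2+a$, the floor term in the conjectured bound is $0$, and the claim collapses to the single ceiling term; your ``bulk block on a general ambient square'' picture therefore explains a formula the conjecture does not actually assert, and proving a bound on larger boards would not establish the stated inequality (nor conversely), since the paper fixes $n=|\PolyP|$. Second, the right-hand side mixes $(a+2)^2+a$, $(a+1)^2+a$, and $(a+1)^2$, and nothing in your sketch produces that asymmetry; the small-case sanity checks you defer to the end (e.g.\ the $b=1$ family) should be the first step, because if the formula fails there the block design is moot. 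Until the arrangement is written down cell by cell, its maximality checked against your insertion lemma, and the resulting count shown to equal the displayed expression, there is a genuine gap.
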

        
        \begin{question}\label{ques:better bounds L polyomino}
            For a free $L$ polyomino, $L_{a,b}$, under what conditions do $a$ and $b$ need to be for $\CPFR{L_{a,b}}=2$, $\CPFR{L_{a,b}}=3$, $\CPFR{L_{a,b}}=4$, and $\CPFR{L_{a,b}}=5$?
        \end{question}
    
    \subsection{T Polyominoes}
    
        \begin{question} \label{ques:better bounds T polyomino}
            For a free $T$ polyomino, $T_{a,b}$, under what conditions do $a$ and $b$ need to be for $\CPFR{T_{a,b}}=2$, $\CPFR{T_{a,b}}=3$, and $\CPFR{T_{a,b}}=4$?
        \end{question}
        
        \begin{question}\label{ques:irregular T polyominoes}
            If $T$ polyominoes, $T_{a,b}$ has an addition parameter, $c$, such that $T_{a,b,c} = \{ (i,1): 1 \leq i \leq a+b+1\} \cup \{(a+1,j): 2 \leq j \leq c+1 \}$, how would the $\CPFR{T_{a,b,c}}$ and $\CPFI{T_{a,b,c}}$ be affected?
        \end{question}
    
    \subsection{Plus Polyominoes}
    
    \begin{question}\label{ques:cross polyominoes}
        If $plus$ polyominoes, $P_a$, have three addition variables, $b,c$ and $d$, such that $P_{a,b,c,d} = \{(i,a+1): 1 \leq i \leq 2a+1\} \cup \{(a+1,j): 1 \leq j \leq 2a+1 \} \cup \{(i,0): -a \leq i \leq b; (0,j): (0,j): -d \leq j \leq c \}$, how would the $cp(P_{a,b,c,d})$ be affected?
    \end{question}

    \bibliographystyle{abbrv}
    \bibliography{References}

\begin{thebibliography}{1}

\bibitem{Goddard95}
W.~Goddard.
\newblock Mistilings with dominoes.
\newblock {\em Discrete Math.}, 137(1-3):361--365, 1995.

\bibitem{book:Chap14}
J.~Goodman, J.~O'Rourke, and C.~Tóth.
\newblock {\em Handbook of Discrete and Computation Geometry}.
\newblock CRC Press, 3 edition, 2017.

\bibitem{Gyarfas88}
A.~Gy\'{a}rf\'{a}s, J.~Lehel, and Z.~Tuza.
\newblock Clumsy packing of dominoes.
\newblock {\em Discrete Math.}, 71(1):33--46, 1988.

\bibitem{Sands71}
B.~Sands.
\newblock The {G}unport {P}roblem.
\newblock {\em Math. Mag.}, 44(4):193--196, 1971.

\bibitem{Walzer14b}
S.~Walzer.
\newblock Clumsy packings in the grid.
\newblock {\em Karlsruhe Institute of Technology, Institute for Algebra and
  Geometry Institute of Theoretical Informatics}, Bachelor's Thesis, 2012.

\bibitem{WALZER14}
S.~Walzer, M.~Axenovich, and T.~Ueckerdt.
\newblock Packing polyominoes clumsily.
\newblock {\em Computational Geometry}, 47(1):52--60, 2014.

\end{thebibliography}

\end{document}